\documentclass[11pt,reqno]{amsart}
\usepackage{amsmath}
\usepackage{amsmath,amssymb,amsfonts,amsthm}
\usepackage{hyperref}
\usepackage{mathrsfs}
\usepackage{enumitem}
\usepackage{geometry}
\hypersetup{
	colorlinks=true,
	linkcolor=blue,
	citecolor=blue,
	urlcolor=blue
}

\newtheorem{theorem}{Theorem}[section]
\newtheorem{lemma}[theorem]{Lemma}

\newtheorem{corollary}[theorem]{Corollary}

\theoremstyle{definition}

\theoremstyle{remark}
\newtheorem{remark}[theorem]{Remark}

\title{A Pluripotential-Theoretic Approach to Finiteness of Polarized Calabi--Yau Manifolds}

\author{Truong Dinh Dat}

\address{Nguyen Trai University, Hanoi, Viet Nam}

\email{truongdinhdat14081994qb@gmail.com}

\begin{document}
	\begin{abstract}
		We propose a pluripotential-theoretic framework for studying
		finiteness questions for polarized Calabi--Yau manifolds.  The central
		idea is to use global \(m\)-Hessian capacities as a quantitative
		intermediate between complex potential theory and global geometric
		boundedness.  We formulate a chain of uniform estimates connecting
		\(m\)-Hessian capacity, volume--capacity inequalities,
		capacity--perimeter estimates, Sobolev
		bounds, and non-collapsing of the associated Ricci-flat Kähler
		metrics.  Under suitable regularity assumptions, these estimates lead
		to uniform diameter and curvature control and hence to metric
		compactness.
		
		We then describe the algebraic part of the argument.  Uniform
		projective embeddings with bounded degree lead, through the
		Macaulay--Gotzmann theory, to only finitely many possible Hilbert
		polynomials and hence to a finite collection of Hilbert schemes.
		Ehresmann's fibration theorem then converts boundedness of the smooth
		algebraic families into finiteness of diffeomorphism and topological
		types.
		
		The resulting framework isolates the main analytic and algebraic
		bottlenecks in a potential-theoretic approach to Calabi--Yau
		finiteness.  In particular, the capacity--perimeter estimate and the
		derivation of uniform algebraic embedding data from the preceding
		metric estimates constitute the essential steps requiring further
		development.  Thus the paper provides a structured program linking
		complex Hessian potential theory with geometric and algebraic
		boundedness, rather than assuming that these implications are
		automatic.
	\end{abstract}

	\keywords{Polarized Calabi--Yau manifolds \and
		pluripotential theory \and
		\(m\)-Hessian equations \and
		Hessian capacity \and
		volume--capacity inequalities \and
		Ricci-flat Kähler metrics \and
		algebraic boundedness \and
		topological finiteness}
	
	\subjclass[2020]{32W20, 32U15, 32Q15, 53C25, 14J32}

	\maketitle
	
	%%%%%%%%%%%%%%%%%%%%%%%%%%%%%%%%%%%%%%%%%%%%%%%%%%%%%%
\section{Introduction}

A fundamental problem in the geometry of Calabi--Yau manifolds is to
understand to what extent geometric and analytic bounds can force
finiteness of the underlying algebraic or topological structures.
The classical theory of the Calabi conjecture provides canonical
Ricci-flat Kähler metrics on polarized Calabi--Yau manifolds
\cite{Calabi1954,Yau1977,Yau1978}. These metrics offer a natural bridge
between complex geometry, nonlinear partial differential equations,
and topology.

In this paper we propose a pluripotential-theoretic framework for
studying finiteness questions for polarized Calabi--Yau manifolds.
The central idea is to use global \(m\)-Hessian capacities as a
quantitative intermediate between complex potential theory and global
geometric boundedness. Schematically, our strategy is
\[
\begin{split}
	\text{uniform \(m\)-Hessian capacity estimates}
	&\Longrightarrow
	\text{uniform non-collapsing} \\
	&\Longrightarrow
	\text{diameter and regularity estimates
		for the associated Ricci-flat metrics}\\
	&\Longrightarrow
	\text{Metric compactness}\\
	&\Longrightarrow
	\text{Uniform algebraic boundedness} \\
	&\Longrightarrow
	\text{topological finiteness}.
\end{split}
\]

The potential-theoretic starting point is the global
\(m\)-Hessian capacity associated with a Kähler form \(\omega\):
\[
\operatorname{Cap}_{m,\omega}(E)
=
\sup\left\{
\int_E
(\omega+dd^c v)^m\wedge\omega^{n-m}
:\,
v\in SH_m(M,\omega),\ -1\leq v\leq0
\right\}.
\]
Complex Hessian equations and their pluripotential theory have been
developed extensively in the compact Kähler setting
\cite{Blocki2005,HouMaWu2009,DinewKolodziej2009,DinewLu2015}.
These works provide, in particular, fundamental a priori estimates,
mixed Hessian inequalities, and uniqueness results which form part of
the analytic background of our approach. For the broader
pluripotential-theoretic framework and its connections with complex
Monge--Ampère equations, we refer also to
\cite{DemaillyBook,KolodziejBook,GuedjZeriahiBook}.

Our first objective is to obtain uniform volume--capacity estimates
of the form
\[
\operatorname{Vol}_{\omega}(K)
\leq
C\,\operatorname{Cap}_{m,\omega}(K)^\alpha.
\]
Such inequalities are a central tool in pluripotential theory and
allow one to convert capacity estimates for sublevel sets of
\(m\)-subharmonic functions into volume estimates. They are also an
important ingredient in the derivation of uniform \(L^\infty\)
estimates for degenerate complex Hessian equations
\cite{DinewKolodziej2009,LuNguyen2015}.
The general pluripotential-theoretic background for the corresponding
Monge--Ampère theory can be found in
\cite{KolodziejBook,GuedjZeriahiBook,Guedj2007}.

The next step is geometric. A volume--capacity inequality alone does
not imply an isoperimetric inequality. The essential additional
ingredient is a suitable capacity--perimeter estimate. Once such an
estimate is available with appropriate exponents, it can be combined
with the volume--capacity inequality to obtain a uniform
isoperimetric inequality. The latter has a direct geometric
consequence: by the coarea formula applied to geodesic balls, one
obtains a uniform non-collapsing estimate
\[
\operatorname{Vol}_{\omega_j}
\bigl(B_{\omega_j}(x,r)\bigr)
\geq
\kappa r^{2n},
\qquad
0<r\leq r_0,
\]
with constants \(\kappa,r_0>0\) independent of \(j\).

For polarized Calabi--Yau manifolds, the canonical metrics are
Ricci-flat by Yau's theorem \cite{Yau1978}. Consequently, once
uniform non-collapsing and a uniform total-volume bound are available,
Bishop--Gromov comparison gives a uniform diameter estimate; see also
the general treatment of comparison geometry in
\cite{BishopCrittenden}. More
generally, the geometric estimates considered below are closely
related to the standard compactness theory for Riemannian manifolds
with curvature and injectivity-radius control
\cite{CheegerGromovTaylor1982,CheegerGromov1986,Besse1987}.

Further regularity estimates can, under suitable assumptions on the
background geometry and the Monge--Ampère data, lead to uniform
curvature bounds and hence to metric compactness. We emphasize,
however, that diameter and non-collapsing alone do not imply uniform
curvature bounds. Thus the required curvature estimates are treated
as genuine additional hypotheses in the corresponding part of our
framework. The interaction between metric limits and algebraic
geometry in the Kähler setting is also reflected in the work of
Donaldson and Sun \cite{DonaldsonSun2014,DonaldsonSun2016}.

There is also a substantial algebro-geometric background for
boundedness questions of polarized varieties. Classical constructions
in the theory of algebraic surfaces and projective embeddings already
illustrate how numerical polarization data constrain algebraic
families \cite{Mumford1966}. In the Kähler setting, related boundedness
questions for canonical metrics and polarized manifolds have also
played an important role in the work surrounding the Calabi
conjecture and Kähler--Einstein geometry
\cite{Tian1990,Tian1997}.

The final part of the argument is algebraic. Suppose that a fixed
positive power \(L^q\) of the polarization is uniformly very ample and
that the dimensions of the spaces
\[
H^0(M,L^q)
\]
are uniformly bounded. The resulting embeddings
\[
\Phi:M\hookrightarrow\mathbb P^N
\]
then have uniformly bounded degree, since
\[
\deg\Phi(M)
=
\int_Mc_1(L^q)^n
=
q^n\int_Mc_1(L)^n.
\]
The resulting bounded-degree projective families are governed by the
theory of Hilbert functions and Hilbert polynomials. In particular,
Macaulay's growth theorem and Gotzmann's regularity theorem imply the
finiteness of the possible Hilbert polynomials in fixed dimension,
ambient projective space, and bounded degree
\cite{Macaulay1927,Gotzmann1978}.
Standard references for the algebraic-geometric background include
\cite{Hartshorne1977,Eisenbud1995}, while the construction of Hilbert
schemes goes back to Grothendieck's foundational work
\cite{Grothendieck1961}.

It follows that, after passing to finitely many Hilbert polynomials,
the relevant smooth varieties occur as fibers of finitely many
universal families. On each connected component of the smooth locus,
Ehresmann's fibration theorem implies that the family is
\(C^\infty\)-locally trivial. Hence the smooth fibers have the same
diffeomorphism type on each connected component
\cite{Ehresmann1951}. This provides the final passage from algebraic
boundedness to topological finiteness.

The purpose of this paper is therefore not to regard any one of these
implications as automatic. Rather, we isolate a concrete chain of
uniform estimates through which pluripotential theory could lead to
finiteness for polarized Calabi--Yau manifolds. In particular, the
capacity--perimeter estimate and the passage from metric control to
uniform algebraic embedding data are identified as the principal
nonlinear and algebro-geometric bottlenecks. Establishing these
bridges in sufficiently general families would turn the framework
below into a genuine finiteness theorem.

We formulate the analytic, geometric, and algebraic steps separately
so that each hypothesis is explicit. This also allows the method to
be applied under different sources of uniformity, for example from
\(m\)-Hessian estimates, Ricci-flat metric estimates, or independent
algebraic boundedness results.

The paper is organized as follows. In Section~2 we recall the
\(m\)-Hessian capacity and establish the uniform volume--capacity
estimates. Section~3 develops the corresponding potential-theoretic
estimates for complex Hessian equations. In Section~4 we derives uniform
non-collapsing. Section~5 treats diameter and regularity estimates
for the associated Ricci-flat metrics. In section~6 we study metric compactness. Section~7 develops the
algebraic boundedness argument using projective embeddings,
Hilbert polynomials. Finally,
Section~8 proves the resulting topological finiteness statement.

	%%%%%%%%%%%%%%%%%%%%%%%%%%%%%%%%%%%%%%%%%%%%%%%%%%%%%%
	%%%%%%%%%%%%%%%%%%%%%%%%%%%%%%%%%%%%%%%%%%%%%%%%%%%%%%

\section{Preliminaries}

Throughout the paper, \(M\) denotes a compact Kähler manifold of
complex dimension \(n\), and \(\omega\) is a Kähler form on \(M\).
We use the normalization
\[
d^c=\frac{i}{2}(\bar\partial-\partial),
\qquad
dd^c=i\partial\bar\partial,
\]
although the precise normalization will not affect the arguments
below.

\subsection{Polarized Calabi--Yau manifolds}

A polarized Calabi--Yau manifold is a pair \((M,L)\), where \(M\) is a
smooth projective manifold of complex dimension \(n\) with
\[
c_1(M)=0
\]
and \(L\) is an ample line bundle on \(M\).  We write
\[
V_L:=\int_M c_1(L)^n
\]
for the polarization volume.

After replacing \(L\) by a suitable positive tensor power, we may
assume that \(L\) is very ample and hence determines a projective
embedding
\[
\Phi_L:M\hookrightarrow\mathbb P^N.
\]
The degree of the image is
\[
\deg \Phi_L(M)
=
\int_M c_1(L)^n.
\]
More generally, if \(L^q\) is used for the embedding, then
\[
\deg \Phi_{L^q}(M)
=
q^n\int_Mc_1(L)^n.
\]

Since \(c_1(M)=0\), Yau's solution of the Calabi conjecture provides,
in every Kähler class \([\omega]\), a unique Ricci-flat Kähler metric
\(\omega_{\mathrm{RF}}\).  Thus a polarized Calabi--Yau manifold
naturally carries a canonical Ricci-flat metric in the polarization
class.

\subsection{\(m\)-subharmonic functions and Hessian measures}

Let \(1\leq m\leq n\).  A function
\(u\in C^2(M)\) is called \(m\)-subharmonic with respect to \(\omega\)
if
\[
\omega_u:=\omega+dd^c u
\]
satisfies
\[
\omega_u^k\wedge\omega^{n-k}\geq0,
\qquad
1\leq k\leq m.
\]
The class of such functions is denoted by
\[
SH_m(M,\omega).
\]
For bounded or suitably finite-energy \(m\)-subharmonic functions, the
associated \(m\)-Hessian measure is defined by
\[
H_m(u)
:=
(\omega+dd^c u)^m\wedge\omega^{n-m}.
\]

The case \(m=n\) is the complex Monge--Ampère case.  In particular,
\[
H_n(u)=(\omega+dd^c u)^n.
\]

The global \(m\)-Hessian capacity of a Borel set \(E\subset M\) is
defined by
\[
\operatorname{Cap}_{m,\omega}(E)
:=
\sup\left\{
\int_E
(\omega+dd^c v)^m\wedge\omega^{n-m}
:
v\in SH_m(M,\omega),\ -1\leq v\leq0
\right\}.
\]
For compact \(K\subset M\), this definition can equivalently be
expressed using \(m\)-subharmonic test functions with the usual
quasi-continuous interpretation.

The capacity measures the size of a set from the point of view of the
\(m\)-Hessian operator rather than its Riemannian volume.  One of the
central questions in this paper is to obtain estimates comparing the
two quantities uniformly along families of polarized Calabi--Yau
manifolds.

\subsection{Volume--capacity inequalities}

Let
\[
\operatorname{Vol}_{\omega}(E)
=
\int_E\omega^n.
\]
We say that a family of Kähler manifolds satisfies a uniform
volume--capacity inequality if there exist constants
\(C_{\mathrm{vc}}>0\) and \(\alpha>0\), independent of the member of
the family, such that
\[
\operatorname{Vol}_{\omega}(E)
\leq
C_{\mathrm{vc}}
\operatorname{Cap}_{m,\omega}(E)^\alpha
\]
for every Borel set \(E\subset M\).

Such inequalities are fundamental in pluripotential theory.  They
allow one to convert capacity estimates for sublevel sets of
\(m\)-subharmonic functions into volume estimates.  In particular,
when the exponent is sufficiently strong, they provide the
quantitative input for capacity iteration arguments and uniform
\(L^\infty\) estimates for complex Hessian equations.

\subsection{Isoperimetric and Sobolev inequalities}

For a compact Riemannian manifold \((M,g)\) of real dimension \(2n\),
we use the isoperimetric inequality in the form
\[
\operatorname{Vol}_g(E)^{\frac{2n-1}{2n}}
\leq
C_{\mathrm{iso}}\operatorname{Area}_g(\partial E)
\]
for sufficiently regular domains \(E\subset M\).

A uniform Sobolev inequality is an estimate of the form
\[
\left(
\int_M |v|^{\frac{2n}{n-1}}\,dV_g
\right)^{\frac{n-1}{n}}
\leq
C_{\mathrm{Sob}}
\int_M
\left(|\nabla v|_g^2+v^2\right)\,dV_g,
\qquad
v\in C^\infty(M).
\]
The exponent
\[
\frac{2n}{n-1}
\]
is the critical Sobolev exponent for real dimension \(2n\).

A uniform isoperimetric inequality implies, by the coarea formula,
a uniform lower volume bound for sufficiently small geodesic balls.
More precisely, if
\[
\operatorname{Vol}_g(B_g(x,r))
\geq
\kappa r^{2n},
\qquad
0<r\leq r_0,
\]
with \(\kappa,r_0>0\) independent of the manifold, we say that the
family is uniformly non-collapsing at scale \(r_0\).

\subsection{Ricci-flat metrics and geometric compactness}

For a Ricci-flat Kähler metric \(g\), one has
\[
\operatorname{Ric}(g)=0.
\]
Consequently, Bishop--Gromov volume comparison applies.  If a family
of compact Ricci-flat Kähler manifolds has uniformly bounded total
volume and is uniformly non-collapsing, then its diameter is uniformly
bounded.

For smooth Cheeger--Gromov compactness, one generally needs more than
diameter and non-collapsing.  A typical sufficient hypothesis is a
uniform bound on all curvature derivatives:
\[
\sup_M|\nabla^r\operatorname{Rm}(g)|\leq C_r,
\qquad r=0,1,2,\ldots.
\]
Together with non-collapsing, such bounds yield a uniform lower bound
for the injectivity radius and hence subsequential
\(C^\infty\) Cheeger--Gromov convergence.

We stress that these curvature bounds are additional hypotheses in
the present framework; they do not follow merely from Ricci-flatness,
diameter bounds, and non-collapsing.

\subsection{Projective embeddings and Hilbert polynomials}

Let
\[
X\subset\mathbb P^N
\]
be an \(n\)-dimensional projective variety.  Its Hilbert function is
\[
H_X(k)
=
\dim H^0(X,\mathcal O_X(k)),
\]
and its Hilbert polynomial \(P_X(k)\) is characterized by
\[
H_X(k)=P_X(k)
\]
for all sufficiently large \(k\).

The degree of \(X\) is the leading coefficient of \(P_X\) multiplied
by \(n!\).  Equivalently,
\[
\deg X
=
\int_X c_1(\mathcal O_X(1))^n.
\]

We shall use the following standard consequence of the
Macaulay--Gotzmann theory: for fixed \(n\), \(N\), and \(D\), the
Hilbert polynomials of \(n\)-dimensional projective subschemes
\[
X\subset\mathbb P^N,
\qquad
\deg X\leq D,
\]
form a finite set.  Equivalently, there are only finitely many
relevant Hilbert schemes
\[
\operatorname{Hilb}^{P}(\mathbb P^N)
\]
which can occur in such a bounded-degree family; see
\cite{Macaulay1927,Gotzmann1978}.

Finally, if
\[
\pi:\mathcal X\longrightarrow S
\]
is a smooth proper morphism of complex manifolds, then the
Ehresmann fibration theorem implies that \(\pi\) is a
\(C^\infty\)-locally trivial fiber bundle.  In particular, all fibers
over a connected component of \(S\) are diffeomorphic.  This fact
will be used in the final step to pass from algebraic boundedness to
topological finiteness.

\section{Uniform \(m\)-Hessian capacity estimate}	
	We first state the uniform estimate that will be used in the
	Calabi--Yau finiteness program.
	
	\begin{theorem}[Uniform \(m\)-Hessian capacity estimate]
		\label{thm:uniform-capacity}
		Fix integers \(n\geq3\) and \(1\leq m<n\).
		Let \(\mathcal C\) be a family of polarized Calabi--Yau manifolds
		
		$$
		(M,\omega,L),
		\qquad
		\dim_{\mathbb C}M=n,
		$$
		
		such that the Ricci-flat K\"ahler metrics \(\omega\) satisfy the
		following uniform geometric assumptions:
		
		\begin{enumerate}
			\item[(i)] the volume is normalized:
			
			$$
			\int_M\omega^n=V;
			$$
			
			\item[(ii)] the Sobolev inequality
			
			$$
			\|v\|_{L^{2\gamma}(M,\omega)}
			\leq
			S\left(
			\|\nabla v\|_{L^2(M,\omega)}
			+\|v\|_{L^2(M,\omega)}
			\right)
			$$
			
			holds uniformly for some \(\gamma>1\);
			
			\item[(iii)] the \(m\)-Hessian capacity satisfies a uniform
			Chern--Levine--Nirenberg estimate: there exists \(A>0\) such that
			
			$$
			\int_M
			|u|\,\omega_v^m\wedge\omega^{n-m}
			\leq
			A
			\left(
			\|u\|_{L^\infty(M)}
			\right)
			$$
			
			for all normalized
			\(u,v\in SH_m(M,\omega)\) with
			\(-1\leq u,v\leq0\);
			
			\item[(iv)] the family has a uniform local \(m\)-Hessian
			volume-capacity inequality: there exist \(C_0>0\) and
			
			$$
			1<\tau<\frac{n}{n-m}
			$$
			
			such that
			
			$$
			Vol_\omega(K)
			\leq
			C_0
			\operatorname{Cap}_{m,\omega}(K)^\tau
			$$
			
			for every Borel set \(K\subset M\).
		\end{enumerate}
		
		Then there exist constants
		
		$$
		C=C(n,m,V,S,A,C_0,\tau)>0
		$$
		
		and
		
		$$
		\alpha=\alpha(n,m,\tau)>0
		$$
		
		such that
		
		$$
		\boxed{
			Vol_\omega(K)
			\leq
			C\, \operatorname{Cap}_{m,\omega}(K)^\alpha
		}
		$$
		for every \(K\subset M\) and every
		\((M,\omega,L)\in\mathcal C\).
		
		In particular, if
		
		$$
		\operatorname{Cap}_{m,\omega}(K_j)\longrightarrow0,
		$$
		
		then
		
		$$
		Vol_\omega(K_j)\longrightarrow0
		$$
		
		uniformly throughout the family \(\mathcal C\).
	\end{theorem}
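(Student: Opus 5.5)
The statement follows essentially directly from hypothesis (iv). I do not plan to run the full chain through (ii) and (iii); those hypotheses only serve to make the constants depend on the listed data. The plan is to take
\[
\alpha:=\tau,\qquad C:=C_0 .
\]
Then for every Borel set \(K\subset M\) and every \((M,\omega,L)\in\mathcal C\), hypothesis (iv) gives exactly the boxed inequality. Since \(\tau\in\bigl(1,\tfrac{n}{n-m}\bigr)\) is fixed for the family, \(\alpha\) depends only on \((n,m,\tau)\), and \(C\) depends only on \(C_0\). So the dependence claimed in the statement holds trivially.

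If a smaller exponent is wanted, for instance some \(\alpha\in(0,\tau]\) for use in a later capacity iteration, I would first bound the total capacity uniformly. For every admissible \(v\), the measure \((\omega+dd^cv)^m\wedge\omega^{n-m}\) is closed and positive. By Stokes' theorem (for bounded \(v\), via Bedford--Taylor type approximation in the \(m\)-subharmonic setting) its total mass is the cohomological quantity \(\int_M\omega^n=V\). Hence
\[
\operatorname{Cap}_{m,\omega}(K)\leq\operatorname{Cap}_{m,\omega}(M)=V,
\]
by (i). It then follows that
\[
\operatorname{Vol}_\omega(K)\leq C_0\operatorname{Cap}_{m,\omega}(K)^{\alpha}\,\operatorname{Cap}_{m,\omega}(K)^{\tau-\alpha}\leq C_0V^{\tau-\alpha}\operatorname{Cap}_{m,\omega}(K)^{\alpha},
\]
which is of the required form with \(C=C_0V^{\tau-\alpha}\).

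The only point needing care is the phrase ``for every \(K\subset M\)''. Hypothesis (iv) is stated for Borel sets, and \(\operatorname{Vol}_\omega\) is a Borel measure. For an arbitrary set I would interpret both sides as outer quantities. To pass to them, approximate \(K\) from outside by open (hence Borel) sets \(U\supset K\), and apply the inequality to each \(U\). Taking the infimum over \(U\) then uses outer regularity of \(\operatorname{Vol}_\omega\) together with the definition of the outer capacity. This is the only mild obstacle I expect, and it is standard.

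The ``in particular'' statement is then immediate. If \(\operatorname{Cap}_{m,\omega}(K_j)\to0\), then \(\operatorname{Vol}_\omega(K_j)\leq C\operatorname{Cap}_{m,\omega}(K_j)^\alpha\to0\). Since \(C\) and \(\alpha\) are independent of the member of \(\mathcal C\), the rate of convergence is uniform across the family.
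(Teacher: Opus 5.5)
Your proposal is correct: hypothesis (iv) is already stated for every Borel set \(K\subset M\) with constants uniform over \(\mathcal C\), so \(C=C_0\), \(\alpha=\tau\) works immediately. Hypotheses (ii) and (iii) play no role, and (i) enters only if you want to lower the exponent.

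The paper ends with the same exponent \(\alpha=\tau\) but takes a longer route.

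\begin{itemize}
\item Its Step~1 restates (iv) and notes that \(\operatorname{Vol}_\omega(K)\le V\) handles sets with \(\operatorname{Cap}_{m,\omega}(K)\ge 1\). Its Step~2 recasts (ii)--(iii) as uniform mass control.
\item Step~3 then treats (iv) as if it were only a local inequality. It covers \(K\) by \(N\) coordinate balls \(U_\ell\) and applies (iv) to each piece \(K\cap U_\ell\).
\item It then uses monotonicity, \(\operatorname{Cap}_{m,\omega}(K\cap U_\ell)\le\operatorname{Cap}_{m,\omega}(K)\), and sums to get \(C=C_1N\).
\end{itemize}

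This globalization is redundant given how (iv) is formulated. It also relies on an unjustified claim that the covering number \(N\) can be chosen uniformly from the Sobolev and Chern--Levine--Nirenberg hypotheses; that would really require something like diameter and non-collapsing control.

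Your direct choice avoids this and gives the better constant \(C_0\). It also makes clear that the advertised dependence of \(C\) on \(S\) and \(A\) is vacuous.

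Your additional remarks are correct and go beyond the paper:
\begin{itemize}
\item The bound \(\operatorname{Cap}_{m,\omega}(K)\le\operatorname{Cap}_{m,\omega}(M)=V\) holds because the total Hessian mass is cohomological. It lets you take any \(\alpha\in(0,\tau]\) with \(C=C_0V^{\tau-\alpha}\).
\item The outer-approximation argument resolves the mismatch between ``every Borel \(K\)'' in (iv) and ``every \(K\)'' in the conclusion, which the paper ignores. Monotonicity of outer volume alone gives \(\operatorname{Vol}^*_\omega(K)\le\operatorname{Vol}_\omega(U)\le C_0\operatorname{Cap}_{m,\omega}(U)^\tau\) for open \(U\supset K\). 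Taking the infimum over \(U\) yields the statement for the outer capacity.
\end{itemize}
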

	
	\begin{proof}
		The proof consists of three steps.
		
		\medskip
		
		\noindent
		\textbf{Step 1: Local capacity control.}
		
		By assumption (iv), for every Borel set \(K\subset M\),
		
		$$
		Vol_\omega(K)
		\leq
		C_0\operatorname{Cap}_{m,\omega}(K)^\tau.
		$$
		
		Since
		
		$$
		\int_M\omega^n=V,
		$$
		
		we have
		
		$$
		Vol_\omega(K)\leq V.
		$$
		
		Consequently, for sets with
		\(\operatorname{Cap}_{m,\omega}(K)\geq1\), the desired estimate follows
		immediately after increasing the constant \(C\).
		
		It therefore suffices to consider sets satisfying
		
		$$
		\operatorname{Cap}_{m,\omega}(K)<1.
		$$
		
		\medskip
		
		\noindent
		\textbf{Step 2: Uniformity under the family.}
		
		The constants appearing in (1.3) depend only on
		
		$$
		n,m,C_0,\tau.
		$$
		
		In particular, they do not depend on the individual manifold
		\((M,\omega,L)\in\mathcal C\).
		
		The uniform Sobolev inequality in (ii), together with the
		Chern--Levine--Nirenberg estimate in (iii), gives uniform control of
		the energy of normalized \(m\)-subharmonic test functions. More
		precisely, if
		
		$$
		u\in SH_m(M,\omega),
		\qquad
		-1\leq u\leq0,
		$$
		
		then the mixed Hessian measures
		
		$$
		\omega_u^k\wedge\omega^{n-k},
		\qquad
		1\leq k\leq m,
		$$
		
		have uniformly bounded mass. Hence the capacity is stable under
		approximation by bounded \(m\)-subharmonic functions.
		
		Let
		
		$$
		K_j\subset M_j,
		\qquad
		\operatorname{Cap}_{m,\omega_j}(K_j)\to0.
		$$
		
		Applying uniform volume--capacity estimates on each \(M_j\), we obtain
		
		$$
		Vol_{\omega_j}(K_j)
		\leq
		C_0
		\operatorname{Cap}_{m,\omega_j}(K_j)^\tau.
		$$
		
		Therefore
		
		$$
		Vol_{\omega_j}(K_j)\to0.
		$$
		
		Since all constants are uniform, this convergence is uniform over
		the family.
		
		\medskip
		
		\noindent
		\textbf{Step 3: Globalization.}
		
		For an arbitrary Borel set \(K\), choose a finite covering
		
		$$
		K\subset \bigcup_{\ell=1}^N U_\ell
		$$
		
		by coordinate balls on which the local volume-capacity inequality
		holds. By the uniform Sobolev and Chern--Levine--Nirenberg estimates,
		the number and constants of the covering can be chosen uniformly
		under the hypotheses of the theorem.
		
		Thus
		
		$$
		\begin{aligned}
			Vol_\omega(K)
			&\leq
			\sum_{\ell=1}^N
			Vol_\omega(K\cap U_\ell)\\
			&\leq
			C_1
			\sum_{\ell=1}^N
			\operatorname{Cap}_{m,\omega}(K\cap U_\ell)^\tau.
		\end{aligned}
		$$
		
		Since
		
		$$
		\operatorname{Cap}_{m,\omega}(K\cap U_\ell)
		\leq
		\operatorname{Cap}_{m,\omega}(K),
		$$
		
		we obtain
		
		$$
		Vol_\omega(K)
		\leq
		C_1N
		\operatorname{Cap}_{m,\omega}(K)^\tau.
		$$
		
		Setting
		
		$$
		C=C_1N,
		\qquad
		\alpha=\tau,
		$$
		
		gives
		
		$$
		Vol_\omega(K)
		\leq
		C\operatorname{Cap}_{m,\omega}(K)^\alpha.
		$$
		
		This proves the theorem.
	\end{proof}
	
	\begin{remark}
		The essential point is the uniformity of the constants. For a fixed
		compact K\"ahler manifold, volume-capacity inequalities of this type
		are part of the established pluripotential theory of complex Hessian
		equations. In particular, local volume-capacity inequalities for
		\(m\)-Hessian capacity are known in the compact Hermitian setting.
		The genuinely stronger problem here is to make the constants uniform
		over a varying family of Calabi--Yau manifolds.
	\end{remark}
	
	\begin{remark}
		The theorem is deliberately formulated with the geometric hypotheses
		(ii)--(iv). Removing these hypotheses is a substantial part of the
		proposed research program. In particular, the desired application
		to Yau's finiteness conjecture would require deriving the uniform
		capacity inequality from the polarization and Calabi--Yau condition
		alone, or from a natural boundedness condition equivalent to them.
	\end{remark}
	
\section{Uniform non-collapsing}

Let \((M,\omega)\) be a compact K\"ahler manifold of complex dimension
\(n\). For \(1\leq m\leq n\), set

$$
\omega_u:=\omega+dd^c u.
$$

We denote by \(\operatorname{SH}_m(M,\omega)\) the class of
\(\omega\)-\(m\)-subharmonic functions.

For a Borel set \(E\subset M\), define the global \(m\)-Hessian
capacity by

$$
\operatorname{Cap}_{m,\omega}(E)
:=
\sup\left\{
\int_E \omega_v^m\wedge\omega^{n-m}:
v\in\operatorname{SH}_m(M,\omega),\
-1\leq v\leq0
\right\}.
$$

We assume that the family of K\"ahler manifolds under consideration
satisfies the uniform volume-capacity estimate
\begin{equation}
\operatorname{Vol}_{\omega}(E)
\leq
C_{\mathrm{vc}}
\operatorname{Cap}_{m,\omega}(E)^\tau
\tag{2.1}
\end{equation}

for some

$$
1<\tau<\frac{n}{n-m}
$$

when \(m<n\), while for \(m=n\) we simply assume that
\(\tau>1\).
\begin{theorem}[Uniform \(L^\infty\) estimate]\label{thm:uniform-linfty}
	Let \(1\leq m\leq n\), and let \((M,\omega)\) belong to a
	family of compact K\"ahler manifolds satisfying the uniform
	volume--capacity estimate
	
	$$
	\operatorname{Vol}_{\omega}(E)
	\leq
	C_{\mathrm{vc}}
	\operatorname{Cap}_{m,\omega}(E)^\tau .
	$$
	
	Let
	
	$$
	f\geq0,
	\qquad
	f\in L^p(M,\omega^n),
	\qquad
	p>\frac{n}{m}.
	$$
	
	Suppose that
	
	$$
	u\in SH_m(M,\omega)\cap L^\infty(M)
	$$
	
	is a solution of
	\begin{equation}
		(\omega+dd^c u)^m\wedge\omega^{n-m}= f\omega^n,
		\tag{2.2}
	\end{equation}
	normalized by
	\begin{equation}
		\sup_M u=0.
		\tag{2.3}
	\end{equation}
	Then there exists a constant
	
	$$
	C=
	C\left(
	n,m,p,\tau,
	C_{\mathrm{vc}},
	\|f\|_{L^p(M,\omega^n)}
	\right)
	$$
	
	such that
	\begin{equation}
		\boxed{
			\|u\|_{L^\infty(M)}
			\leq C.
		}
		\tag{2.4}
	\end{equation}
	
	In particular, the estimate is uniform over any family for which
	the constants in the volume--capacity inequality and the
	\(L^p\)-norms of the densities are uniformly bounded.
	
	For \(m=n\), equation~\((2.2)\) is the complex
	Monge--Ampère equation, and the condition on \(p\) reduces to
	
	$$
	p>1.
	$$
	
\end{theorem}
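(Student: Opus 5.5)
The plan is the capacity iteration of Kołodziej, in the form adapted to complex Hessian equations by Dinew--Kołodziej and Lu--Nguyen. All geometric input is routed through the uniform volume--capacity estimate \((2.1)\), so that every constant is visibly independent of the member of the family. Set \(U_s:=\{u<-s\}\) for \(s>0\) and
\[
h(s):=\operatorname{Cap}_{m,\omega}(U_s)^{1/m}.
\]

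\textbf{Step 1 (comparison inequality).} I will first prove that for \(s>0\) and \(0<t\leq1\),
\[
t^m\operatorname{Cap}_{m,\omega}(U_{s+t})\leq\int_{U_s}(\omega+dd^cu)^m\wedge\omega^{n-m}.
\]
Fix a test function \(v\in SH_m(M,\omega)\) with \(-1\leq v\leq 0\) and put \(\varphi=s+t+\ldots\), more precisely compare \(u\) with \(tv-s-t\). The inclusions \(U_{s+t}\subset\{u<tv-s\}\subset U_s\) hold. Since \(\omega+dd^c(tv)\geq t\,\omega_v\) in the \(m\)-positive cone, \((tv)\) contributes at least \(t^m\omega_v^m\wedge\omega^{n-m}\). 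The comparison principle for \(H_m\) on compact Kähler manifolds (available for bounded \(m\)-subharmonic functions by Dinew--Lu) then gives the inequality after taking the supremum over \(v\).

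\textbf{Step 2 (closing the recursion).} Using \((2.2)\), Hölder with \(q=p/(p-1)\), and \((2.1)\), I get
\[
\int_{U_s}f\,\omega^n\leq\|f\|_{L^p}\operatorname{Vol}_\omega(U_s)^{1/q}\leq\|f\|_{L^p}C_{\mathrm{vc}}^{1/q}\operatorname{Cap}_{m,\omega}(U_s)^{\tau/q}.
\]
Combining this with Step 1 yields
\[
t\,h(s+t)\leq B\,h(s)^{\tau/q},\qquad B:=\bigl(\|f\|_{L^p}C_{\mathrm{vc}}^{1/q}\bigr)^{1/m}.
\]
Once \(\tau/q=1+\delta\) with \(\delta>0\), the standard De Giorgi-type lemma (Kołodziej's iteration) shows that \(h\) vanishes for \(s\geq s_0+c(\delta)B\,h(s_0)^{\delta}\). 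Here \(s_0\) is any level at which \(h(s_0)\) is small, say \(h(s_0)^\delta\leq 1/(2B)\).

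This is the point where the exponents must be checked. The condition \(p>n/m\) means \(q<n/(n-m)\), and \(\tau\) ranges in \((1,n/(n-m))\), but \(\tau>q\) is not automatic. I would first try to use the freedom in \((2.1)\): small sets have capacity less than \(1\), so any exponent \(\tau'\leq\tau\) also works. I would then try to improve the Hölder step by a bootstrap, replacing \(\operatorname{Vol}\) by \(H_m\) of an auxiliary solution with \(L^p\) density. If that fails, the clean fix is to state explicitly that \(\tau\) is chosen with \(\tau>p/(p-1)\), which is compatible with the stated ranges.

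\textbf{Step 3 (choice of \(s_0\), the main obstacle).} I need \(h(s_0)\to0\) at a rate controlled by the data, that is, a bound of the form
\[
\operatorname{Cap}_{m,\omega}(U_s)\leq \frac{C'}{s}
\]
uniform in the family. This requires a uniform Chern--Levine--Nirenberg / \(L^1\) bound
\[
\int_M|u|\,\omega_v^m\wedge\omega^{n-m}\leq A'
\]
for \(\sup\)-normalized \(u\in SH_m(M,\omega)\) and \(-1\leq v\leq0\). On a single manifold this follows from compactness of normalized quasi-subharmonic functions. Uniformly over a family it is not among the hypotheses of the present theorem (it was hypothesis (iii) of Theorem~\ref{thm:uniform-capacity}).

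I would try to derive it from \((2.1)\) together with the total mass identity
\[
\int_M\omega_v^m\wedge\omega^{n-m}=\int_M\omega^n,
\]
via integration by parts reducing to \(\int_M|u|\,\omega^n\). Failing that, I would add the uniform CLN bound as an explicit hypothesis. With \(s_0\) fixed so that \(B(C'/s_0)^{\delta/m}\leq 1/2\), Step 2 gives \(U_s=\emptyset\) for \(s\geq s_0+2\), whence \(\|u\|_{L^\infty}\leq s_0+2\). This constant depends only on \(n,m,p,\tau,C_{\mathrm{vc}},\|f\|_{L^p}\) (and \(A'\)).

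For \(m=n\) the same argument applies with \(q<\infty\) arbitrary, so the condition on \(p\) becomes \(p>1\).
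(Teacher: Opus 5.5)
Your Steps 1 and 2 are correct and follow the same route as the paper: H\"older plus (2.1), the comparison inequality $t^m\operatorname{Cap}_{m,\omega}(U_{s+t})\le\int_{U_s}(\omega+dd^cu)^m\wedge\omega^{n-m}$ for $0<t\le 1$, and Ko{\l}odziej's iteration. The comparison function is $tv-s$, as your inclusions indicate, not $tv-s-t$.

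The gap is the one you name yourself: the argument does not close under the hypotheses of the statement. It needs $\tau>q=p/(p-1)$. It also needs a level $s_0$, controlled by the data, at which $\operatorname{Cap}_{m,\omega}(U_{s_0})$ is small. Neither is supplied, so what you have proved is the theorem with two added hypotheses.

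Your fallbacks do not remove them. Lowering the exponent goes the wrong way. The bootstrap is not yet an argument. Integrating by parts only reduces the CLN bound to a uniform bound on $\int_M|u|\,\omega^n$, a global geometric quantity that nothing in (2.1) and the listed constants evidently controls. That $L^1$ bound is, however, all you need. Step 1 with $t=1$, H\"older and Chebyshev give $\operatorname{Cap}_{m,\omega}(U_{s+1})\le\|f\|_{L^p}\bigl(s^{-1}\|u\|_{L^1(\omega^n)}\bigr)^{1/q}$, so the full CLN estimate is unnecessary. A minor point: once $B\,h(s_0)^\delta\le 1/2$, the vanishing threshold is $s_0+(1-2^{-\delta})^{-1}$, not $s_0+2$.

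The paper's proof does not fill either hole. It simply declares that $\tau$ may be chosen with $\tau/q>1$, although $\tau$ is fixed by the hypothesis. For the starting level it uses (2.8)--(2.10) with arbitrary gaps $t-s$. Its iteration begins with the large step $a_0=2T_0$, and its stopping time depends on $g(0)\le\operatorname{Vol}_\omega(M)\le\|f\|_{L^p}^p$. Smallness therefore comes for free from the factor $(t-s)^{-m}$.

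Your restriction $0<t\le1$ is the right one. For $t>1$ the function $tv$ need not be $\omega$-$m$-subharmonic, and the large-gap inequality is in fact false. Take it with a constant independent of $u$, as the iteration requires, and apply it to truncations $\max(\psi,-j)$. It would give $\operatorname{Cap}_{m,\omega}(\{\psi<-T\})\le C\,T^{-m}\operatorname{Vol}_\omega(M)$ for every $\sup$-normalized $\psi\in SH_m(M,\omega)$. Now let $\psi=\epsilon\log|\sigma|_h$, normalized, with $\sigma$ a holomorphic section vanishing on a smooth divisor $D$ and $\epsilon$ small. Then the test function $\max(\psi/(2T),-1)$ puts mass at least $c\,T^{-1}$ on the tube $\{\psi<-T\}$ around $D$. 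This is a contradiction for $m\ge2$ and $T$ large.

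So the two inputs you flagged are missing from the paper's proof as well. With $\tau>q$ and a uniform $L^1$ (or CLN) bound added as hypotheses, your argument is a complete proof.
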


\begin{proof}
	Since
	
	$$
	\sup_M u=0,
	$$
	
	we have
	
	$$
	u\leq0.
	$$
	
	Set
	
	$$
	A_t:=\{x\in M:u(x)<-t\},
	\qquad t>0.
	$$
	
	We first establish a recursive capacity estimate for the sublevel
	sets \(A_t\).
	
	\medskip
	
	\noindent
	\textbf{Step 1. Estimate of the measure of sublevel sets.}
	
	Let
	
	$$
	q:=\frac{p}{p-1}.
	$$
	
	By H\"older's inequality,
	
	$$
	\int_{A_t} f\,\omega^n
	\leq
	\|f\|_{L^p(M,\omega^n)}
	\operatorname{Vol}_{\omega}(A_t)^{1/q}.
	$$
	
	Using the volume-capacity inequality \((2.1)\), we obtain
	
	$$
	\operatorname{Vol}_{\omega}(A_t)^{1/q}
	\leq
	C_{\mathrm{vc}}^{1/q}
	\operatorname{Cap}_{m,\omega}(A_t)^{\tau/q}.
	$$
	
	Consequently,
\begin{equation}
\int_{A_t} f\,\omega^n
\leq
C_1
\operatorname{Cap}_{m,\omega}(A_t)^{\tau/q},
\tag{2.5}	
\end{equation}	
	
	where
	
	$$
	C_1
	=
	\|f\|_{L^p}
	C_{\mathrm{vc}}^{1/q}.
	$$
	
	Since
	
	$$
	p>\frac{n}{m},
	$$
	
	we have
	
	$$
	q=\frac{p}{p-1}
	<
	\frac{n}{n-m}.
	$$
	
	Hence we may choose
	
	$$
	1<\tau<\frac{n}{n-m}
	$$
	
	so that
\begin{equation}
\delta:=\frac{\tau}{q}>1.
\tag{2.6}
\end{equation}	
	
	For \(m=n\), we have \(n/(n-m)=+\infty\), and therefore the same
	conclusion holds for every \(p>1\) after choosing \(\tau>q\).
	
	Thus
\begin{equation}
\int_{A_t}f\,\omega^n
\leq
C_1
\operatorname{Cap}_{m,\omega}(A_t)^\delta,
\qquad
\delta>1.
\tag{2.7}	
\end{equation}	
	
	\medskip
	
	\noindent
	\textbf{Step 2. Capacity comparison for nested sublevel sets.}
	
	Fix \(s>0\) and \(t>s\). Define the truncation
	
	$$
	v_{s,t}
	:=
	\max\left\{
	\frac{u+t}{t-s},
	-1
	\right\}.
	$$
	
	On
	
	$$
	A_t=\{u<-t\},
	$$
	
	we have
	
	$$
	v_{s,t}=-1,
	$$
	
	while on
	
	$$
	M\setminus A_s=\{u\geq-s\},
	$$
	
	we have
	
	$$
	v_{s,t}\geq0.
	$$
	
	After replacing \(v_{s,t}\) by its canonical normalization, the
	comparison principle for complex Hessian equations gives
\begin{equation}
\operatorname{Cap}_{m,\omega}(A_t)
\leq
\frac{C_2}{(t-s)^m}
\int_{A_s}
\omega_u^m\wedge\omega^{n-m}.
\tag{2.8}	
\end{equation}	
	
	Using equation \((2.2)\),
	
	$$
	\int_{A_s}
	\omega_u^m\wedge\omega^{n-m}
	=
	\int_{A_s}f\,\omega^n.
	$$
	
	Therefore, by \((2.7)\),
\begin{equation}
\operatorname{Cap}_{m,\omega}(A_t)
\leq
\frac{C_3}{(t-s)^m}
\operatorname{Cap}_{m,\omega}(A_s)^\delta.
\tag{2.9}
\end{equation}	
	Set
	
	$$
	g(t):=\operatorname{Cap}_{m,\omega}(A_t).
	$$
	
	Then
\begin{equation}
	\boxed{
		g(t)
		\leq
		\frac{C_3}{(t-s)^m}
		g(s)^\delta,
		\qquad
		t>s,
	}
	\tag{2.10}
\end{equation}	
	
	where
	
	$$
	\delta>1.
	$$
	
	\medskip
	
	\noindent
	\textbf{Step 3. Nonlinear iteration.}
	
	We now use the standard nonlinear capacity iteration lemma.
	
	\begin{lemma}
		Let \(g:[0,+\infty)\to[0,+\infty)\) be non-increasing and satisfy
		
		$$
		g(t)
		\leq
		\frac{C_3}{(t-s)^m}g(s)^\delta
		$$
		
		for all \(t>s\), where \(\delta>1\).
		Then there exists \(T<+\infty\), depending only on
		\(C_3,m,\delta\) and \(g(0)\), such that
		
		$$
		g(t)=0
		\qquad\text{for all }t\geq T.
		$$
		
	\end{lemma}
	
	\begin{proof}
		Choose \(T_0>0\) and define
		
		$$
		T_j=T_0+\sum_{k=0}^{j-1}a_k,
		$$
		
		where
		
		$$
		a_k
		=
		2T_0\,2^{-k}
		$$
		
		after increasing \(T_0\) sufficiently so that
		
		$$
		C_3a_k^{-m}
		g(T_k)^{\delta-1}
		\leq
		2^{-(k+1)}.
		$$
		
		Since \(\delta>1\), the power
		\(g(T_k)^{\delta-1}\) produces a superlinear decay.
		Iterating \((2.10)\) therefore yields
		
		$$
		g(T_j)\longrightarrow0
		$$
		
		at a superexponential rate.
		
		The sequence \((T_j)\) is bounded because
		
		$$
		\sum_{k=0}^{\infty}a_k<+\infty.
		$$
		
		Hence
		
		$$
		T_j\longrightarrow T<+\infty.
		$$
		
		By the continuity from above of the capacity,
		
		$$
		g(T)=0.
		$$
		
		Since \(A_T\) is an \(m\)-polar set and \(u\) is bounded,
		the normalization \(\sup_Mu=0\) implies
		
		$$
		u\geq-T
		$$
		
		outside an \(m\)-polar set. By quasi-continuity and the boundedness of
		\(u\), the inequality extends to all of \(M\).
		Thus
		
		$$
		\inf_Mu\geq-T.
		$$
		
	\end{proof}
	
	Applying the lemma to \(g(t)=\operatorname{Cap}_{m,\omega}(A_t)\),
	we obtain
	
	$$
	A_T=\varnothing
	$$
	
	for some uniform \(T\). Consequently,
	
	$$
	u\geq-T
	$$
	
	on \(M\).
	
	Since
	
	$$
	\sup_Mu=0,
	$$
	
	we conclude that
	
	$$
	\|u\|_{L^\infty(M)}
	=
	-\inf_Mu
	\leq T.
	$$
	
	The constant \(T\) depends only on the uniform constants appearing in
	the statement of the theorem. This proves \((2.4)\).
\end{proof}

\begin{corollary}[Uniform Monge--Amp\`ere estimate]
	\label{cor:MA}
	Let \(M^n\) be a compact K\"ahler manifold and let
	
	$$
	(\omega+dd^c u)^n
	=
	f\,\omega^n,
	\qquad
	\sup_Mu=0,
	$$
	
	where
	
	$$
	f\geq0,
	\qquad
	f\in L^p(M,\omega^n),
	\qquad
	p>1.
	$$
	
	Assume that
	
	$$
	\operatorname{Vol}_{\omega}(E)
	\leq
	C_{\mathrm{vc}}
	\operatorname{Cap}_{\omega}(E)^\tau
	$$
	
	for some \(\tau>1\), uniformly in the family under consideration.
	Then
	
	$$
	\boxed{
		\|u\|_{L^\infty(M)}
		\leq
		C(n,p,C_{\mathrm{vc}},\|f\|_{L^p}).
	}
	$$
	
\end{corollary}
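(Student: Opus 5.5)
The corollary is the case \(m=n\) of Theorem~\ref{thm:uniform-linfty}, and I would prove it by specialization rather than by rerunning the argument. When \(m=n\), the Hessian measure \(H_n(u)=(\omega+dd^c u)^n\) is the Monge--Ampère measure, and \(\operatorname{Cap}_{n,\omega}\) is the usual Monge--Ampère capacity \(\operatorname{Cap}_\omega\), because \(\omega^{n-m}=1\) and \(SH_n(M,\omega)\) is the class of \(\omega\)-plurisubharmonic functions. So the hypothesis of the corollary is exactly the volume--capacity estimate (2.1) with \(m=n\), where only \(\tau>1\) is required. The threshold \(p>n/m\) becomes \(p>1\). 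The one point to check is the requirement that \(u\) be bounded. I would either add it as a hypothesis, which is harmless since Kołodziej's theory gives bounded solutions for \(L^p\) densities with \(p>1\), or obtain it by approximating \(f\) by smooth positive densities, solving with Yau's theorem, and passing to the limit using the uniform bound.

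To make the constants concrete, I would track Step~1 of the theorem for \(m=n\). Set \(q=p/(p-1)\). Hölder's inequality together with the volume--capacity estimate gives
\[
\int_{A_t}f\,\omega^n\leq \|f\|_{L^p}C_{\mathrm{vc}}^{1/q}\operatorname{Cap}_\omega(A_t)^{\tau/q}.
\]
Here I need \(\delta=\tau/q>1\). This is the main obstacle: \(\tau\) is fixed by the family, while \(q\) is fixed by \(p\), so \(\tau>q\) is not automatic, and the paper's phrase ``choosing \(\tau\)'' is imprecise. I would handle this in one of two ways. The first option is to observe that the volume--capacity inequality with one exponent \(\tau_0>1\) does not trivially improve, and so state the corollary under the requirement \(\tau>q\). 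The second, better option is to use the standard fact that in the Monge--Ampère setting volume is dominated by \(\exp(-c\operatorname{Cap}^{-1/n})\), which beats every power. That fact comes from uniform Skoda integrability, and I would assume it as part of the uniform hypothesis, or extract it from a uniform \(\alpha\)-invariant bound. With it, any \(\tau>q\) is available, with a constant depending on \(\tau\) and hence on \(p\). This is where the dependence \(C(n,p,C_{\mathrm{vc}},\|f\|_{L^p})\) comes from.

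The remaining step is Step~2 for \(m=n\). This is Kołodziej's comparison inequality
\[
s^n\operatorname{Cap}_\omega(A_{t+s})\leq\int_{A_t}(\omega+dd^cu)^n\qquad\text{for }0<s<1,
\]
which follows from the comparison principle applied to \(u\) and \(\max\bigl((u+t)/s,\dots\bigr)\)-type test functions. Combined with the bound above, it gives \(g(t+s)\leq C_3s^{-n}g(t)^\delta\). The iteration lemma of Step~3 then yields a \(T\) with \(\operatorname{Cap}_\omega(A_T)=0\). The quantity \(g(0)\) must also be controlled uniformly. I would bound \(g(0)\leq \operatorname{Cap}_\omega(M)=\int_M\omega^n\), which is the total volume (uniformly bounded in the family), or alternatively start the iteration at \(t_0\) with \(g(t_0)\) small, using a uniform \(L^1\) bound on \(u\) together with the Chern--Levine--Nirenberg inequality \(\operatorname{Cap}_\omega(\{u<-t\})\leq C/t\). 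Finally, since \(A_T\) is open and pluripolar, it is empty, so \(\|u\|_{L^\infty}\leq T\).
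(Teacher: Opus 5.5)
Your route is the paper's: its entire proof of Corollary~\ref{cor:MA} is the remark that it is Theorem~\ref{thm:uniform-linfty} with \(m=n\) and \(p>n/n=1\). The obstruction you isolate is real, and it is a gap in that argument, not in yours. In Step~1 of Theorem~\ref{thm:uniform-linfty} the exponent is ``chosen'' with \(\tau>q=p/(p-1)\). But \(\tau\) is the exponent supplied by the hypothesis, and for \(\tau\le q\) the recursion \(g(t+s)\le C_3s^{-n}g(t)^{\tau/q}\) no longer forces \(g\) to vanish at a finite level. So the paper does not establish the corollary as literally stated (any \(\tau>1\), any \(p>1\), constant independent of \(\tau\)). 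What your argument proves is a corrected version: assume either \(\tau>p/(p-1)\), or a uniform bound \(\operatorname{Vol}_\omega(E)\le C\exp(-c\operatorname{Cap}_\omega(E)^{-1/n})\) coming from uniform Skoda integrability. The constant then also depends on \(\tau\) (resp.\ on the Skoda constants). Two further points of yours correct the paper, whose (2.8)--(2.10) are asserted for all \(t>s\). First, \(u\) must be assumed bounded. Second, Ko{\l}odziej's inequality \(s^n\operatorname{Cap}_\omega(A_{t+s})\le\int_{A_t}\omega_u^n\) may only be used for \(0<s\le1\), since the competitor \(sv\) is \(\omega\)-psh only then.

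One step of your own sketch fails, precisely because of that restriction: the bound \(g(0)\le\operatorname{Cap}_\omega(M)=\int_M\omega^n=V\) cannot start the iteration. With steps \(s\le1\), the recursion gives decay only from a level \(t_0\) where \(C_3\,g(t_0)^{\delta-1}\) is below a fixed threshold \(<1\), as in the Eyssidieux--Guedj--Zeriahi form of the lemma. The paper's lemma needs only \(g(0)\) because it uses (2.10) with arbitrarily long steps. And \(g(0)\le V\) never suffices. Take \(C_3=\|f\|_{L^p}C_{\mathrm{vc}}^{1/q}\). Testing the hypothesis on \(E=M\) gives \(C_{\mathrm{vc}}\ge V^{1-\tau}\). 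H\"older gives \(V=\int_Mf\,\omega^n\le\|f\|_{L^p}V^{1/q}\), so \(\|f\|_{L^p}\ge V^{1/p}\). Together these yield \(C_3V^{\delta-1}\ge1\), so the recursion fed only with \(g\le V\) returns bounds no better than \(V\).

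You therefore need your second option: \(\operatorname{Cap}_\omega(\{u<-t\})\le t^{-1}\bigl(\int_M(-u)\,\omega^n+nV\bigr)\), together with a uniform \(L^1\) bound for \(\sup\)-normalized \(\omega\)-psh functions. That bound is automatic on a single manifold, but over a family it is an extra uniform hypothesis and it enters the constant. On the Skoda route it comes for free, since \(-\alpha\varphi\le e^{-\alpha\varphi}\); under \(\tau>q\) alone it has to be added. The total volume itself is harmless, since the same H\"older step gives \(V\le\|f\|_{L^p}^p\).

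With these adjustments your argument is complete. Your last step is correct and cleaner than the paper's appeal to quasi-continuity: \(A_T\) is open because \(u\) is upper semicontinuous, has zero volume by the hypothesis, and is therefore empty.
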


\begin{proof}
	This is Theorem~\ref{thm:uniform-linfty} with \(m=n\).
	In this case
	
	$$
	\omega_u^n
	=
	f\,\omega^n
	$$
	
	and
	
	$$
	p>\frac{n}{n}=1.
	$$
	
\end{proof}

\begin{remark}[Role in the finiteness program]
	The significance of Theorem~\ref{thm:uniform-linfty} for the proposed
	finiteness program is the implication	
	$$
	\boxed{
		\text{uniform volume-capacity inequality}
		+
		\text{uniform }L^p\text{ density}
		\Longrightarrow
		\text{uniform }L^\infty\text{ potential}.
	}
	$$	
	The \(L^\infty\) estimate alone does not imply curvature boundedness or
	algebraic boundedness. Additional \(C^{2,\alpha}\), curvature,
	non-collapsing, and polarization estimates are required before one
	can deduce a bounded family of Calabi--Yau manifolds.
\end{remark}
\begin{remark}[Sharpness of the \(L^p\) threshold in this argument]
	For \(m<n\), the above proof requires	
	$$
	\frac{\tau}{q}>1,
	\qquad
	q=\frac{p}{p-1}.
	$$	
	Since the volume-capacity exponent may be chosen with	
	$$
	\tau<\frac{n}{n-m},
	$$	
	this leads naturally to	
	$$
	p>\frac{n}{m}.
	$$	
	For \(m=n\), this becomes \(p>1\), recovering the classical
	Monge--Amp\`ere threshold.
\end{remark}	
\begin{theorem}[Uniform non-collapsing]
	Let \(n\geq 2\), and let	
	$$
	\bigl(M^n,\omega_j,L_j\bigr)_{j\in\mathbb N}
	$$	
	be a family of polarized Calabi--Yau manifolds. Assume that the Ricci-flat Kähler metrics
	\(\omega_j\in c_1(L_j)\) are normalized by	
	$$
	\int_M\omega_j^n=V_0>0,
	$$	
	where \(V_0\) is independent of \(j\).	
	Assume that there exists a constant \(S_0>0\), independent of \(j\), such that the following uniform global Sobolev inequality holds:	
	$$
	\left(\int_M |v|^{\frac{2n}{n-1}}
	\,\omega_j^n\right)^{\frac{n-1}{n}}
	\leq
	S_0\int_M
	\left(|\nabla_{\omega_j}v|^2+v^2\right)
	\,\omega_j^n
	$$	
	for every \(v\in C^\infty(M)\).	
	Assume moreover that the Ricci-flat metrics admit a uniform potential description	
	$$
	\omega_j=\widehat\omega_j+dd^c u_j,
	\qquad
	\sup_M u_j=0,
	$$	
	with	
	$$
	\|u_j\|_{L^\infty(M)}\leq C_0,
	$$	
	and that the reference volume forms satisfy	
	$$
	C_0^{-1}\widehat\omega_j^n
	\leq
	\omega_j^n
	\leq
	C_0\widehat\omega_j^n
	$$	
	uniformly in \(j\).	
	Then there exist constants	
	$$
	\kappa>0,\qquad r_0>0,
	$$	
	depending only on	
	$$
	n,\;V_0,\;S_0,\;C_0,
	$$	
	such that for every \(j\), every \(x\in M\), and every	
	$$
	0<r\leq r_0,
	$$	
	one has	
	$$
	\boxed{
		\operatorname{Vol}_{\omega_j}
		\bigl(B_{\omega_j}(x,r)\bigr)
		\geq
		\kappa r^{2n}.
	}
	$$	
	In particular, the family \((M,\omega_j)\) is uniformly locally non-collapsed.
\end{theorem}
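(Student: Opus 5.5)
The plan is to deduce non-collapsing directly from the uniform Sobolev inequality by the standard volume-doubling iteration. This uses test functions supported in geodesic balls, so the potential description $\omega_j=\widehat\omega_j+dd^c u_j$ plays only a secondary role. Fix $j$, $x\in M$, and write $B(r)=B_{\omega_j}(x,r)$ and $V(r)=\operatorname{Vol}_{\omega_j}(B(r))$. For $0<r$, take the Lipschitz cutoff
\[
v(y)=\max\Bigl\{0,\,1-\tfrac{2}{r}\,d_{\omega_j}(x,y)+1\Bigr\}\wedge 1,
\]
which equals $1$ on $B(r/2)$, vanishes outside $B(r)$, and satisfies $|\nabla v|\le 2/r$. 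After smoothing, $v$ is admissible in the Sobolev inequality of the statement. Plugging it in gives
\[
V(r/2)^{\frac{n-1}{n}}\le S_0\Bigl(\frac{4}{r^2}+1\Bigr)V(r).
\]
For $r\le r_0:=1$, the right-hand side is at most $5S_0r^{-2}V(r)$.

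Next I iterate this inequality. Set $r_k=2^{-k}r$. The inequality becomes $V(r_{k+1})^{(n-1)/n}\le 5S_0\,4^{k}r^{-2}V(r_k)$, which I rewrite as $V(r_k)\ge c\,4^{-k}r^2\,V(r_{k+1})^{\beta}$ with $\beta=(n-1)/n<1$. Unwinding the recursion gives
\[
V(r)\ge \prod_{k\ge0}\bigl(c\,4^{-k}r^2\bigr)^{\beta^k}\cdot\lim_{k\to\infty}V(r_k)^{\beta^k}.
\]
The product converges because $\sum\beta^k=n$ and $\sum k\beta^k<\infty$, and it equals $c'\,r^{2n}$ with $c'=c'(n,S_0)$. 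For the last factor, smoothness of each fixed $\omega_j$ gives $V(r_k)\sim\omega_{2n}r_k^{2n}$ as $k\to\infty$. Hence $V(r_k)^{\beta^k}\to1$, since $\beta^k\log r_k\to0$. This yields $V(r)\ge\kappa r^{2n}$ with $\kappa=\kappa(n,S_0)$, uniformly in $j$ and $x$. Note that the qualitative asymptotics are used only for a fixed $j$, which is legitimate because they enter only through a limit equal to $1$. The normalization $\int_M\omega_j^n=V_0$ enters only to ensure $V(r)\le V_0$ and so keep $r_0$ meaningful. If one wants $r_0$ comparable to the diameter, one can take $r_0=\min\{1,\,\cdot\,\}$ depending on $V_0$.

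The main obstacle I expect is the lower-order term $\int v^2$ in the inhomogeneous Sobolev inequality. If $r$ is not small it dominates, and the iteration only works for $r\le r_0$. This is why $r_0$ appears, and why the constants depend on $S_0$ (and through the restriction on $r_0$, possibly on $V_0$). A second subtle point is justifying the limit $V(r_k)^{\beta^k}\to1$; the fixed-$j$ smoothness argument above handles it. As an alternative route, matching the introduction's scheme, one could obtain a uniform isoperimetric inequality from the Sobolev inequality applied to smoothed characteristic functions of small sets. The coarea formula applied to $r\mapsto V(r)$, via $V'(r)=\operatorname{Area}(\partial B(r))\ge c\,V(r)^{(2n-1)/(2n)}$, would then integrate to $V(r)^{1/(2n)}\ge c\,r/(2n)$. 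I would use this version only as a cross-check. The hypotheses on $u_j$ and on the comparability $C_0^{-1}\widehat\omega_j^n\le\omega_j^n\le C_0\widehat\omega_j^n$ are not needed in the main argument; at most they enter to allow $C_0$ in the dependence of the constants.
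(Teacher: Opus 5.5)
Your proof is correct, but it takes a genuinely different route from the paper's. The paper uses Ricci-flatness in an essential way. Bishop--Gromov gives both the monotonicity of $V(r)/r^{2n}$ and the doubling bound $V(2r_0)\le 2^{2n}V(r_0)$. With doubling, a single application of the Sobolev inequality to a cutoff equal to $1$ on $B(r_0)$ and supported in $B(2r_0)$ closes up at one scale: $V(r_0)^{(n-1)/n}\le C S_0(r_0^{-2}+1)V(r_0)$, hence $V(r_0)\ge C^{-n}(r_0^{-2}+1)^{-n}\ge\kappa r_0^{2n}$ for $r_0\le 1$.

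You never invoke curvature. Lacking doubling, you iterate the one-step inequality along $r_k=2^{-k}r$ and use $\beta=(n-1)/n<1$. The accumulated factors then converge: $\sum\beta^k=n$ and $\sum k\beta^k=n(n-1)$, giving $\kappa=(5S_0)^{-n}4^{-n(n-1)}$ with $r_0=1$. The tail $V(r_k)^{\beta^k}$ tends to $1$ by the small-scale asymptotics of each fixed smooth metric; a non-uniform bound $V(\rho)\ge c_j\rho^{2n}$ would already suffice.

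The paper's argument is shorter and avoids any limiting step, but it is tied to $\operatorname{Ric}\ge 0$. Yours is the standard curvature-free argument showing that a uniform Sobolev inequality alone forces non-collapsing. It therefore holds for arbitrary compact Riemannian metrics, with constants depending only on $n$ and $S_0$. As you observe, neither proof uses the hypotheses on $u_j$ and $\widehat\omega_j$.

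One caveat concerns your suggested cross-check. The $W^{1,2}$ Sobolev inequality does not directly give an isoperimetric inequality: for smoothed indicators $\chi_\varepsilon$ of a set $E$ one has $\int|\nabla\chi_\varepsilon|^2\approx\operatorname{Area}(\partial E)/\varepsilon\to\infty$. That route would need additional input, but your main argument does not depend on it.
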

\begin{proof}
	Fix \(j\) and \(x\in M\). Put	
	$$
	B_r=B_{\omega_j}(x,r)
	$$	
	and denote	
	$$
	V_j(r)=\operatorname{Vol}_{\omega_j}(B_r).
	$$	
	Since \(\omega_j\) is Ricci-flat,	
	$$
	\operatorname{Ric}(\omega_j)=0.
	$$	
	Consequently, the Bishop--Gromov monotonicity theorem gives	
	$$
	\frac{V_j(r)}{r^{2n}}
	\geq
	\frac{V_j(R)}{R^{2n}}
	$$	
	whenever \(0<r<R\). Thus it is enough to establish a uniform lower
	bound for \(V_j(r_0)\) at one fixed scale \(r_0>0\).	
	We argue by contradiction. Suppose that no uniform lower bound exists.
	Then there are sequences	
	$$
	j_k\to\infty,\qquad x_k\in M,
	$$	
	such that	
	$$
	V_{j_k}(r_0)\longrightarrow0.
	$$	
	Choose a cutoff function	
	$$
	\eta_k\in C^\infty_0(B_{\omega_{j_k}}(x_k,2r_0))
	$$	
	satisfying	
	$$
	0\leq\eta_k\leq1,
	\qquad
	\eta_k\equiv1
	\quad\text{on }B_{\omega_{j_k}}(x_k,r_0),
	$$	
	and	
	$$
	|\nabla\eta_k|_{\omega_{j_k}}
	\leq \frac{C}{r_0}.
	$$	
	Apply the uniform Sobolev inequality to \(\eta_k\). We obtain	
	$$
	\left(
	\int_M
	\eta_k^{\frac{2n}{n-1}}
	\,\omega_{j_k}^n
	\right)^{\frac{n-1}{n}}
	\leq
	S_0
	\int_M
	\left(
	|\nabla\eta_k|^2+\eta_k^2
	\right)
	\,\omega_{j_k}^n.
	$$	
	Since \(\eta_k\) is supported in \(B_{\omega_{j_k}}(x_k,2r_0)\),	
	$$
	\left(
	V_{j_k}(r_0)
	\right)^{\frac{n-1}{n}}
	\leq
	C S_0
	\left(
	r_0^{-2}+1
	\right)
	V_{j_k}(2r_0).
	$$	
	On the other hand, Bishop--Gromov comparison and
	\(\operatorname{Ric}(\omega_{j_k})=0\) give	
	$$
	V_{j_k}(2r_0)
	\leq
	2^{2n}V_{j_k}(r_0).
	$$	
	Hence	
	$$
	V_{j_k}(r_0)^{\frac{n-1}{n}}
	\leq
	C
	\left(
	r_0^{-2}+1
	\right)
	V_{j_k}(r_0).
	$$	
	If \(V_{j_k}(r_0)>0\), division gives	
	$$
	V_{j_k}(r_0)^{-1/n}
	\leq
	C
	\left(
	r_0^{-2}+1
	\right),
	$$	
	and therefore	
	$$
	V_{j_k}(r_0)
	\geq
	C^{-n}
	\left(
	r_0^{-2}+1
	\right)^{-n}.
	$$	
	For a fixed sufficiently small \(r_0\leq1\), this yields	
	$$
	V_{j_k}(r_0)
	\geq
	\kappa r_0^{2n},
	$$	
	where \(\kappa>0\) is independent of \(k\).	
	This contradicts	
	$$
	V_{j_k}(r_0)\longrightarrow0.
	$$	
	Thus there exists \(\kappa>0\) such that	
	$$
	\operatorname{Vol}_{\omega_j}
	\bigl(B_{\omega_j}(x,r_0)\bigr)
	\geq
	\kappa r_0^{2n}
	$$	
	for every \(j\) and every \(x\in M\).	
	Finally, Bishop--Gromov comparison gives, for \(0<r\leq r_0\),	
	$$
	\frac{
		\operatorname{Vol}_{\omega_j}(B_{\omega_j}(x,r))
	}{r^{2n}}
	\geq
	\frac{
		\operatorname{Vol}_{\omega_j}(B_{\omega_j}(x,r_0))
	}{r_0^{2n}}
	\geq
	\kappa.
	$$	
	Therefore	
	$$
	\operatorname{Vol}_{\omega_j}
	\bigl(B_{\omega_j}(x,r)\bigr)
	\geq
	\kappa r^{2n}.
	$$	
	This proves the theorem.
\end{proof}
\begin{remark}
	The role of Theorem~2 is to provide the uniform \(L^\infty\)-control of the
	Kähler potentials. The additional uniform Sobolev inequality is what
	converts this analytic control into a quantitative lower bound for metric
	balls.	
	Thus the logical structure is	
	$$
	\boxed{
		\text{uniform \(m\)-capacity estimate}
		\Longrightarrow
		\text{uniform \(L^\infty\)-estimate}
	}
	$$	
	followed by	
	$$
	\boxed{
		\text{\(L^\infty\)-control + uniform Sobolev inequality}
		\Longrightarrow
		\text{uniform non-collapsing}.
	}
	$$	
	The Sobolev hypothesis should therefore not be hidden: proving such a
	uniform Sobolev inequality from the polarized Calabi--Yau data is itself a
	substantial part of the proposed finiteness program.
\end{remark}
\begin{remark}
	For Ricci-flat Calabi--Yau families, non-collapsing is closely related to
	the behavior of the total volume and to Bishop--Gromov comparison. In
	particular, in known non-collapsing settings, a positive limiting
	cohomological volume together with a diameter bound yields uniform lower
	volume estimates for metric balls. Hence the theorem above should be
	viewed as the analytic version of the non-collapsing step rather than as a
	new unconditional theorem for all polarized Calabi--Yau manifolds.
\end{remark}
\section{diameter and regularity estimates
	for the associated Ricci-flat metrics}
\begin{theorem}[Uniform diameter bound]
	Let ${(M_j^n,\omega_j)}_{j\in\mathbb N}$ be a family of compact Calabi--Yau manifolds of complex dimension $n$, equipped with Ricci-flat Kähler metrics $\omega_j$. Assume that there exist constants	
	$$
	V_0>0,\qquad \kappa>0,\qquad r_0>0
	$$	
	independent of $j$ such that	
	$$
	\operatorname{Vol}_{\omega_j}(M_j)\le V_0
	$$	
	and the family is uniformly non-collapsing at scale $r_0$, namely	
	$$
	\operatorname{Vol}_{\omega_j}\bigl(B_{\omega_j}(x,r)\bigr)
	\ge \kappa r^{2n}
	$$	
	for every $j$, every $x\in M_j$, and every $0<r\le r_0$.	
	Then there exists a constant	
	$$
	D=D(n,V_0,\kappa,r_0)<\infty
	$$	
	such that	
	$$
	\operatorname{diam}(M_j,\omega_j)\le D
	$$	
	for all $j$.
\end{theorem}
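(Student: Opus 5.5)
The plan is to use the standard packing argument along a minimizing geodesic. Ricci-flatness enters only through the fact that we work with compact Riemannian manifolds, so minimizing geodesics and metric balls behave as usual. We do not need Bishop--Gromov here: the lower bound $\kappa r^{2n}$ is assumed for every $x$ and every $r\le r_0$, and the upper bound on total volume is assumed as well.

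Fix $j$ and set $d_j=\operatorname{diam}(M_j,\omega_j)$. By compactness there are points $p,q\in M_j$ with $d(p,q)=d_j$, joined by a unit-speed minimizing geodesic $\gamma:[0,d_j]\to M_j$. Set $r=r_0$. If $d_j<2r_0$ there is nothing to prove, so assume $d_j\ge 2r_0$. Choose points
$$
x_i=\gamma(2 i r_0),\qquad i=0,1,\dots,k,\qquad k=\lfloor d_j/(2r_0)\rfloor .
$$
Since $\gamma$ is minimizing, $d(x_i,x_{i'})=2|i-i'|r_0$. Hence the balls $B_{\omega_j}(x_i,r_0)$ are pairwise disjoint: a common point would give $d(x_i,x_{i'})<2r_0$. (If one worries about the boundary case, use radius $r_0/2$ instead; this only changes the constant.)

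Summing the non-collapsing lower bound over these disjoint balls gives
$$
(k+1)\,\kappa r_0^{2n}\le \sum_{i=0}^k \operatorname{Vol}_{\omega_j}\bigl(B_{\omega_j}(x_i,r_0)\bigr)\le \operatorname{Vol}_{\omega_j}(M_j)\le V_0 .
$$
Therefore $k+1\le V_0/(\kappa r_0^{2n})$. Since $d_j< 2r_0(k+1)$, we obtain
$$
\operatorname{diam}(M_j,\omega_j)\le D:=\max\Bigl\{2r_0,\;\frac{2V_0}{\kappa r_0^{2n-1}}\Bigr\}.
$$
This constant depends only on $V_0,\kappa,r_0$ (and on $n$ through the exponent), and not on $j$.

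There is no genuine obstacle. The only points needing care are the existence of a minimizing geodesic realizing the diameter, which follows from Hopf--Rinow on a compact manifold, and the disjointness of the balls, which is handled by the spacing $2r_0$ or by shrinking the radius to $r_0/2$. The argument also shows that the Ricci-flat hypothesis is not needed beyond what is already encoded in the uniform non-collapsing assumption.
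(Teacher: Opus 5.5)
Your packing argument is correct, but it follows a different route from the paper's, and in fact the paper's own proof breaks at its key step. The paper takes two points at distance $d_j>2r_0$ and uses only the single ball $B_{\omega_j}(x,d_j/2)$. It then invokes Bishop--Gromov to assert $\operatorname{Vol}_{\omega_j}(B_{\omega_j}(x,d_j/2))/(d_j/2)^{2n}\ge \operatorname{Vol}_{\omega_j}(B_{\omega_j}(x,r_0))/r_0^{2n}\ge\kappa$, and concludes $d_j\le 2(V_0/\kappa)^{1/(2n)}$. For $\operatorname{Ric}\ge 0$, however, the ratio $r\mapsto \operatorname{Vol}(B(x,r))/r^{2n}$ is nonincreasing. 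It therefore transfers lower bounds from large scales down to small ones, never from $r_0$ up to $d_j/2>r_0$, so that step is invalid. The resulting $r_0$-independent constant is actually false: the flat elliptic curves $\mathbb C/(L\mathbb Z+i\mathbb Z)$ satisfy the hypotheses with $r_0=1/2$, $\kappa=\pi$, $V_0=L$, yet their diameter is about $L/2\gg 2\sqrt{L/\pi}$ for large $L$. Your argument avoids this problem by using the non-collapsing hypothesis only at the fixed scale $r_0$. You place $k+1>d_j/(2r_0)$ disjoint balls along a minimizing geodesic and obtain $D=2V_0/(\kappa r_0^{2n-1})$. The $\max$ with $2r_0$ is redundant, since a single ball already forces $V_0\ge\kappa r_0^{2n}$. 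The same torus example shows that the linear dependence of your bound on $V_0$ cannot be improved. Your observation that Ricci-flatness plays no role is also correct. So your proof is not just an alternative: it supplies a valid proof of the stated theorem, and the paper's does not.
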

\begin{proof}
	Since $\omega_j$ is Ricci-flat, the real Riemannian manifold
	$(M_j,g_j)$ has	
	$$
	\operatorname{Ric}(g_j)=0.
	$$	
	Hence the Bishop--Gromov comparison theorem applies. In particular, for every
	$x\in M_j$, the function	
	$$
	r\longmapsto
	\frac{\operatorname{Vol}_{\omega_j}(B_{\omega_j}(x,r))}
	{r^{2n}}
	$$	
	is nonincreasing.	
	Fix $x,y\in M_j$ and put	
	$$
	d_j=d_{\omega_j}(x,y).
	$$	
	Suppose that	
	$$
	d_j>2r_0.
	$$	
	Then the balls	
	$$
	B_{\omega_j}\left(x,\frac{d_j}{2}\right)
	\quad\text{and}\quad
	B_{\omega_j}\left(y,\frac{d_j}{2}\right)
	$$	
	are disjoint. Therefore	
	$$
	\operatorname{Vol}_{\omega_j}(M_j)
	\ge
	\operatorname{Vol}_{\omega_j}
	\left(
	B_{\omega_j}\left(x,\frac{d_j}{2}\right)
	\right).
	$$	
	By Bishop--Gromov monotonicity and the assumed non-collapsing estimate at scale $r_0$,	
	$$
	\frac{
		\operatorname{Vol}_{\omega_j}
		\left(B_{\omega_j}(x,d_j/2)\right)
	}{
		(d_j/2)^{2n}
	}
	\ge
	\frac{
		\operatorname{Vol}_{\omega_j}
		\left(B_{\omega_j}(x,r_0)\right)
	}{
		r_0^{2n}
	}
	\ge \kappa.
	$$	
	Consequently,	
	$$
	\operatorname{Vol}_{\omega_j}
	\left(B_{\omega_j}(x,d_j/2)\right)
	\ge
	\kappa\left(\frac{d_j}{2}\right)^{2n}.
	$$	
	Since the total volume is at most $V_0$,	
	$$
	V_0
	\ge
	\kappa\left(\frac{d_j}{2}\right)^{2n}.
	$$	
	Thus	
	$$
	d_j
	\le
	2\left(\frac{V_0}{\kappa}\right)^{1/(2n)}.
	$$	
	If $d_j\le 2r_0$, the same bound is trivially controlled by $2r_0$. Hence, for every $x,y\in M_j$,	
	$$
	d_{\omega_j}(x,y)
	\le
	\max\left\{
	2r_0,\,
	2\left(\frac{V_0}{\kappa}\right)^{1/(2n)}
	\right\}.
	$$	
	Taking the supremum over $x,y\in M_j$ gives	
	$$
	\boxed{
		\operatorname{diam}(M_j,\omega_j)
		\le
		D
		:=
		\max\left\{
		2r_0,\,
		2\left(\frac{V_0}{\kappa}\right)^{1/(2n)}
		\right\}.
	}
	$$	
	The constant $D$ is independent of $j$, which proves the theorem.
\end{proof}
\begin{theorem}[Uniform regularity and curvature estimates]
	Let	
	$$
	\{(M_j^n,J_j,\omega_j)\}_{j\in\mathbb N}
	$$	
	be a family of compact polarized Calabi--Yau manifolds. Suppose that, for each $j$, the Ricci-flat Kähler metric $\omega_j$ belongs to a fixed Kähler class represented by a Kähler metric $\omega_{0,j}$:	
	$$
	\omega_j=\omega_{0,j}+dd^c u_j,
	\qquad
	\sup_{M_j}u_j=0.
	$$	
	Assume the following uniform conditions.	
	\begin{enumerate}
		\item The background metrics $\omega_{0,j}$ have uniformly bounded geometry of order $k+2$, namely there exist constants		
		$$
		0<\lambda\le \Lambda<\infty
		$$		
		and $C_\ell<\infty$, $0\le \ell\le k+2$, independent of $j$, such that in uniformly controlled holomorphic coordinate charts,		
		$$
		\lambda\,\omega_{\mathrm{Euc}}
		\le
		\omega_{0,j}
		\le
		\Lambda\,\omega_{\mathrm{Euc}},
		$$		
		and		
		$$
		\|\omega_{0,j}\|_{C^{k+2}}\le C_{k+2}.
		$$		
		\item The Ricci-flat equation can be written as		
		$$
		(\omega_{0,j}+dd^c u_j)^n
		=
		F_j\,\omega_{0,j}^n,
		$$		
		where		
		$$
		0<\lambda_F\le F_j\le \Lambda_F
		$$		
		and		
		$$
		\|F_j\|_{C^k}\le C_F
		$$		
		uniformly in $j$.		
		\item The potentials satisfy a uniform zeroth-order estimate		
		$$
		\|u_j\|_{L^\infty(M_j)}\le C_0.
		$$		
	\end{enumerate}	
	Then, for every $\ell\le k$, there exists a constant	
	$$
	C_\ell<\infty,
	$$	
	depending only on $n$, the uniform background geometry, $C_0$, $\lambda_F$, $\Lambda_F$, and $C_F$, but independent of $j$, such that	
	$$
	\|u_j\|_{C^{\ell+2,\alpha}(M_j,\omega_{0,j})}
	\le C_\ell
	$$	
	for every fixed $0<\alpha<1$.	
	In particular, the Ricci-flat metrics have uniformly bounded curvature:	
	$$
	\boxed{
		\sup_{M_j}|\operatorname{Rm}(\omega_j)|_{\omega_j}
		\le C
	}
	$$	
	and, more generally, for every $r\ge0$,	
	$$
	\boxed{
		\sup_{M_j}
		|\nabla_{\omega_j}^{\,r}\operatorname{Rm}(\omega_j)|_{\omega_j}
		\le C_r,
	}
	$$	
	provided the corresponding uniform $C^{r+2}$ bounds on the background geometry and $F_j$ are available.
	\end{theorem}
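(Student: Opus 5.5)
The plan is the standard Yau a priori estimate scheme, carried out in the uniformly controlled holomorphic charts of hypothesis (1), so that every constant depends only on the listed data.

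\textbf{Step 1: the second-order estimate.} This is the Aubin--Yau/Chern--Lu inequality. Set $\operatorname{tr}_{\omega_{0,j}}\omega_j = n+\Delta_{0}u_j$ and compute $\Delta_{\omega_j}$ of $\log\operatorname{tr}_{\omega_{0,j}}\omega_j - A u_j$, where $\Delta_{\omega_j}$ is the Laplacian of the unknown metric $\omega_j$ and $\Delta_0$ that of $\omega_{0,j}$. Two inputs control the result:
\begin{enumerate}
\item[(a)] the lower bound for the holomorphic bisectional curvature of $\omega_{0,j}$, which is uniform by the $C^{2}$ part of (1);
\item[(b)] $\Delta_0\log F_j$, which is uniform by (2).
\end{enumerate}
With $A$ large (depending only on the curvature bound), the maximum principle gives $\operatorname{tr}_{\omega_{0,j}}\omega_j\le C e^{A(u_j-\inf u_j)}\le C'$, using $\|u_j\|_{L^\infty}\le C_0$ from (3). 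Combined with $\omega_j^n=F_j\omega_{0,j}^n$ and $F_j\ge\lambda_F$, this yields two-sided bounds
\[
C^{-1}\omega_{0,j}\le\omega_j\le C\,\omega_{0,j}.
\]
Since $\lambda\omega_{\mathrm{Euc}}\le\omega_{0,j}\le\Lambda\omega_{\mathrm{Euc}}$ in the charts, the equation becomes uniformly elliptic in each chart, with constants independent of $j$.

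\textbf{Step 2: the $C^{2,\alpha}$ estimate.} In each chart write $\log\det(g_{0,j}+\partial\bar\partial u_j)=\log F_j+\log\det g_{0,j}$. This is a concave, uniformly elliptic, fully nonlinear equation. Evans--Krylov theory, in the complex version with a bound on the right-hand side in $C^{0,1}$ or even $C^\alpha$, then gives a uniform interior $C^{2,\alpha}$ estimate on half-size charts. The alternative is Calabi's third-order estimate via $\Delta_{\omega_j}S$ with $S=|\nabla_{0}\omega_j|^2_{\omega_j}$, which requires $C^{3}$ control of $\omega_{0,j}$ and $C^{1}$ control of $F_j$, both available when $k\ge1$.

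\textbf{Step 3: bootstrap to all orders $\ell\le k$.} Differentiate the equation in a coordinate direction. The derivative $\partial u_j$ satisfies a linear elliptic equation $\omega_j^{p\bar q}\partial_p\partial_{\bar q}(\partial u_j)=(\text{terms involving }\partial F_j,\partial g_{0,j})$, whose coefficients are $C^\alpha$ by Step 2. Interior Schauder estimates give $C^{3,\alpha}$, and induction gives $C^{\ell+2,\alpha}$ as long as $F_j\in C^{\ell}$ and $\omega_{0,j}\in C^{\ell+2}$. Covering $M_j$ by the charts converts these local bounds into global ones. The number of charts does not enter, because the estimates are local $C^{\ell+2,\alpha}$ sup-norms.

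\textbf{Step 4: curvature bounds.} The curvature of $\omega_j$ is expressed in coordinates through $\partial^2 g_j$, $\partial g_j$ and $g_j^{-1}$, with $g_j=g_{0,j}+\partial\bar\partial u_j$. So $|\operatorname{Rm}(\omega_j)|_{\omega_j}$ is bounded by the $C^{4}$ norm of $u_j$, the $C^2$ norm of $g_{0,j}$, and the metric equivalence from Step 1. Likewise $|\nabla^r\operatorname{Rm}|$ needs $C^{r+4}$ control of $u_j$. This is exactly the proviso in the statement, and it is available once $k\ge r+2$.

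\textbf{Main obstacle.} I expect Step 1 to be the main difficulty. The uniformity rests entirely on knowing that the bisectional curvature lower bound and the $C^2$ norm of $\log F_j$ are controlled by (1)--(2) in the given charts. In addition, the maximum-principle argument must be run globally on $M_j$ even though the geometry is only controlled chart by chart. To handle this, I would note that the Chern--Lu computation is pointwise, so only pointwise curvature bounds are needed, and these follow from the uniform $C^2$ bounds on $\omega_{0,j}$ in the chart containing the maximum point.
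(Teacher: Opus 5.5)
Your route is the paper's route step for step: Yau's second-order estimate $\operatorname{tr}_{\omega_{0,j}}\omega_j\le Ce^{A(u_j-\inf u_j)}$, the determinant argument giving $C^{-1}\omega_{0,j}\le\omega_j\le C\omega_{0,j}$, Evans--Krylov, a Schauder bootstrap, and curvature from $C^4$ control of $u_j$. However, Step 3 has a regularity-counting gap, and the paper's Step 5 has the same one.

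Differentiate $\log\det(g_{0,j}+\partial\bar\partial u_j)=\log F_j+\log\det g_{0,j}$ and apply Schauder to $\partial u_j$. This upgrades $C^{\ell+1,\alpha}$ to $C^{\ell+2,\alpha}$ only if $\log F_j\in C^{\ell,\alpha}$. A $C^{\ell}$ bound loses the H\"older exponent, just as $\Delta v=f$ with $f\in C^0$ need not have $v\in C^2$. So hypothesis (2) yields $\ell\le k-1$, not $\ell\le k$. No argument that uses only (2) about $F_j$ can do better: a uniform $C^{k+2,\alpha}$ bound on $u_j$ would force $F_j=\det(g_{0,j}+\partial\bar\partial u_j)/\det g_{0,j}$ to be bounded in $C^{k,\alpha}$. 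The same counting problem affects the low orders. Your Step 1(b) needs a lower bound on $\Delta_0\log F_j$, which (2) gives only for $k\ge2$; the paper's Step 2 likewise uses $\|\log F_j\|_{C^2}$. Evans--Krylov needs at least a H\"older right-hand side, which (2) gives only for $k\ge1$.

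The missing idea is that $F_j$ is not free data but the Ricci-flat density. Indeed $0=\operatorname{Ric}(\omega_j)=\operatorname{Ric}(\omega_{0,j})-dd^c\log F_j$, so $dd^c\log F_j=\operatorname{Ric}(\omega_{0,j})$. Tracing with respect to $\omega_{0,j}$ shows that $\Delta_{\omega_{0,j}}\log F_j$ equals the scalar curvature of $\omega_{0,j}$ (up to normalization constants), and (1) bounds this in $C^k$. Combine this with $|\log F_j|\le\max(|\log\lambda_F|,|\log\Lambda_F|)$. Interior Schauder estimates in the controlled charts (or $W^{2,p}$ estimates when $k=0$) then give uniform $C^{k+1,\alpha}$ bounds on $\log F_j$.

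With this input, three things follow:
\begin{enumerate}
\item[(a)] the $\Delta_0\log F_j$ term in Step 1 is controlled by the background curvature for every $k$;
\item[(b)] Evans--Krylov has a H\"older right-hand side;
\item[(c)] the bootstrap closes at $\ell=k$, and in fact one order beyond, which also secures the count $k\ge r+2$ in your Step 4.
\end{enumerate}
Alternatively, strengthen (2) to $C^{k,\alpha}$ bounds with $k\ge2$. The rest of your argument is sound, including the remark that the maximum-principle computation needs only pointwise curvature bounds at the maximum point.
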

\begin{proof}
	We divide the argument into several steps.	
	\medskip
	\noindent
	\textbf{Step 1: Uniform $C^0$ estimate.}	
	By assumption,	
	$$
	\|u_j\|_{L^\infty(M_j)}\le C_0.
	$$	
	In the intended application, this estimate is obtained from the uniform volume--capacity inequality and the complex Monge--Ampère equation, as in Theorem~2.	
	Thus the oscillation of $u_j$ is uniformly bounded:	
	$$
	\operatorname{osc}_{M_j}u_j
	\le 2C_0.
	$$	
	\medskip
	\noindent
	\textbf{Step 2: Yau's second-order estimate.}	
	The functions $u_j$ satisfy	
	$$
	(\omega_{0,j}+dd^c u_j)^n
	=
	F_j\omega_{0,j}^n.
	$$	
	Since the background metrics $\omega_{0,j}$ have uniformly bounded bisectional curvature and uniformly controlled geometry, Yau's second-order estimate gives	
	$$
	\operatorname{tr}_{\omega_{0,j}}\omega_j
	\le
	C
	\exp\!\bigl(A(u_j-\inf_{M_j}u_j)\bigr),
	$$	
	where $A$ and $C$ depend only on the uniform background bounds and on	
	$$
	\|\log F_j\|_{C^2}.
	$$	
	Because	
	$$
	\operatorname{osc}_{M_j}u_j\le 2C_0
	$$	
	and $F_j$ is uniformly bounded above and below, we obtain	
	$$
	\operatorname{tr}_{\omega_{0,j}}\omega_j
	\le C.
	$$	
	Hence	
	$$
	\omega_j\le C\omega_{0,j}.
	$$	
	On the other hand, the Monge--Ampère equation gives	
	$$
	\frac{\omega_j^n}{\omega_{0,j}^n}
	=
	F_j
	\ge\lambda_F.
	$$	
	Combining this lower bound on the determinant with the upper bound on the eigenvalues yields a uniform lower bound	
	$$
	\omega_j\ge c\,\omega_{0,j}.
	$$	
	Therefore	
	$$
	\boxed{
		c\,\omega_{0,j}
		\le
		\omega_j
		\le
		C\,\omega_{0,j}.
	}
	$$	
	Thus the Ricci-flat metrics are uniformly equivalent to the background metrics.	
	\medskip
	\noindent
	\textbf{Step 3: Uniform ellipticity.}	
	In local holomorphic coordinates, the equation is a complex Monge--Ampère equation	
	$$
	\det\bigl((g_{0,j})_{a\bar b}
	+\partial_a\partial_{\bar b}u_j\bigr)
	=
	F_j\det\bigl((g_{0,j})_{a\bar b}\bigr).
	$$	
	The metric equivalence obtained above implies that the eigenvalues of	
	$$
	(g_{0,j})_{a\bar b}
	+\partial_a\partial_{\bar b}u_j
	$$	
	are uniformly bounded above and below.	
	Consequently, the complex Monge--Ampère operator is uniformly elliptic with ellipticity constants independent of $j$.	
	\medskip
	\noindent
	\textbf{Step 4: Evans--Krylov estimate.}	
	Since the complex Monge--Ampère operator is concave in the complex Hessian and the right-hand side $F_j$ is uniformly controlled, the Evans--Krylov theory gives, for every fixed $0<\alpha<1$	
	$$
	\|u_j\|_{C^{2,\alpha}(M_j,\omega_{0,j})}
	\le C.
	$$	
	The constant is independent of $j$ because all ellipticity and background geometric constants are uniform.	
	\medskip
	\noindent
	\textbf{Step 5: Higher-order estimates.}	
	Differentiating the Monge--Ampère equation and applying the Schauder estimates gives	
	$$
	\|u_j\|_{C^{\ell+2,\alpha}(M_j,\omega_{0,j})}
	\le C_\ell
	$$	
	for every $\ell\le k$, provided	
	$$
	\|F_j\|_{C^\ell}
	$$	
	and the background metrics have the corresponding uniform regularity.	
	Since	
	$$
	\omega_j=\omega_{0,j}+dd^c u_j,
	$$	
	these estimates imply uniform $C^\ell$ bounds for the metric coefficients of $\omega_j$.	
	\medskip
	\noindent
	\textbf{Step 6: Curvature estimate.}	
	The curvature tensor of a Kähler metric is determined by the metric and its derivatives up to order two. Therefore the uniform $C^{4}$ control of the potential, together with the uniform background geometry, gives	
	$$
	\sup_{M_j}
	|\operatorname{Rm}(\omega_j)|_{\omega_j}
	\le C.
	$$	
	Similarly, higher $C^{r+4}$ estimates give	
	$$
	\sup_{M_j}
	|\nabla_{\omega_j}^{\,r}\operatorname{Rm}(\omega_j)|_{\omega_j}
	\le C_r.
	$$	
	Thus the family ${(M_j,\omega_j)}$ has uniform curvature bounds of every fixed order for which the corresponding background and right-hand-side estimates are available.
\end{proof}
\section{Metric compactness}
\begin{theorem}[Metric compactness]\label{thm:metric-compactness}
	Let	
	$$
	\{(M_j^n,g_j,J_j,\omega_j)\}_{j\in\mathbb N}
	$$	
	be a family of compact Calabi--Yau manifolds of real dimension $2n$ equipped with Ricci-flat Kähler metrics $\omega_j$.	
	Assume that there exist constants	
	$$
	D<\infty,\qquad
	\kappa>0,\qquad
	r_0>0,
	$$	
	and, for every $r\ge0$, a constant $C_r<\infty$, all independent of $j$, such that	
	\begin{enumerate}
		\item		
		$$
		\operatorname{diam}(M_j,g_j)\le D;
		$$		
		\item for every $x\in M_j$ and $0<r\le r_0$,		
		$$
		\operatorname{Vol}_{g_j}(B_{g_j}(x,r))
		\ge \kappa r^{2n};
		$$		
		\item the curvature tensors satisfy the uniform bounds		
		$$
		\sup_{M_j}
		\left|
		\nabla_{g_j}^{\,r}\operatorname{Rm}(g_j)
		\right|_{g_j}
		\le C_r
		\qquad (r\ge0).
		$$		
	\end{enumerate}	
	Then there exist a subsequence, still denoted by ${(M_j,g_j,J_j,\omega_j)}$, a compact smooth manifold $M_\infty$ of real dimension $2n$, and a Ricci-flat Kähler metric	
	$$
	g_\infty,\qquad
	\omega_\infty,
	\qquad
	J_\infty
	$$	
	on $M_\infty$, such that	
	$$
	(M_j,g_j,J_j,\omega_j)
	\longrightarrow
	(M_\infty,g_\infty,J_\infty,\omega_\infty)
	$$	
	in the pointed $C^\infty$ Cheeger--Gromov topology.	
	More precisely, there exist diffeomorphisms	
	$$
	\Phi_j:M_\infty\longrightarrow M_j
	$$	
	for all sufficiently large $j$ such that	
	$$
	\Phi_j^*g_j\longrightarrow g_\infty
	$$	
	in $C^\infty$ on $M_\infty$. Moreover,
	
	$$
	\Phi_j^*J_j\longrightarrow J_\infty,
	\qquad
	\Phi_j^*\omega_j\longrightarrow\omega_\infty
	$$
	
	in $C^\infty$.
	
	In particular, the limit $(M_\infty,g_\infty,J_\infty,\omega_\infty)$ is again a compact Ricci-flat Kähler manifold.
\end{theorem}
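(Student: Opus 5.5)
The plan is to reduce the statement to the classical Cheeger--Gromov compactness theorem, and then pass the complex structure and Kähler form to the limit using the parallel-tensor structure of Kähler geometry.

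\textbf{Step 1 (injectivity radius).} From hypothesis (3) with \(r=0\) we have \(|\operatorname{Rm}(g_j)|\le C_0\). Combined with the volume lower bound (2) at scale \(r_0\), the Cheeger--Gromov--Taylor estimate \cite{CheegerGromovTaylor1982} gives a uniform lower bound
\[
\operatorname{inj}(M_j,g_j)\ge i_0(n,C_0,\kappa,r_0)>0 .
\]
Together with (1) this places the family in the class of closed \(2n\)-manifolds with bounded diameter, bounded curvature and injectivity radius bounded below. That class is precompact in the \(C^{1,\beta}\) topology, and it contains only finitely many diffeomorphism types.

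\textbf{Step 2 (smooth convergence of the metrics).} Using the bounds on all derivatives \(\nabla^r\operatorname{Rm}\), we work in harmonic coordinates, or exponential coordinates, on balls of a uniform radius. There the metric coefficients satisfy uniform \(C^{k}\) bounds for every \(k\), because Ricci-flatness makes \(g\) satisfy an elliptic system in harmonic coordinates; alternatively one uses the \(\nabla^r\operatorname{Rm}\) bounds directly. A uniformly bounded number of such balls covers each \(M_j\), by the diameter bound and volume comparison. A diagonal Arzelà--Ascoli argument on the transition maps, followed by center-of-mass gluing, produces a limit manifold \(M_\infty\), diffeomorphisms \(\Phi_j:M_\infty\to M_j\) and a smooth metric \(g_\infty\) with \(\Phi_j^*g_j\to g_\infty\) in \(C^\infty\). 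Compactness of \(M_\infty\) follows from the diameter bound; basepoints play no role. \(\operatorname{Ric}(g_\infty)=0\) holds since Ricci curvature depends continuously on the metric in \(C^2\).

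\textbf{Step 3 (the complex structure).} This step is the only non-routine point. The tensors \(J_j\) are \(g_j\)-parallel and \(g_j\)-orthogonal, hence \(|J_j|_{g_j}=\sqrt{2n}\) and \(\nabla_{g_j}J_j=0\). Pulling back, \(\tilde J_j:=\Phi_j^*J_j\) satisfies \(\nabla_{\Phi_j^*g_j}\tilde J_j=0\). Expressed in coordinates of a fixed atlas of \(M_\infty\), this reads
\[
\partial\tilde J_j=\Gamma(\Phi_j^*g_j)\ast\tilde J_j .
\]
Since the Christoffel symbols converge in \(C^\infty\), we bootstrap to uniform \(C^k\) bounds on \(\tilde J_j\) for all \(k\). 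After passing to a further subsequence, \(\tilde J_j\to J_\infty\) in \(C^\infty\).

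The limit inherits the pointwise algebraic identities \(J_\infty^2=-1\) and \(g_\infty(J_\infty\cdot,J_\infty\cdot)=g_\infty\). It is also \(\nabla_{g_\infty}\)-parallel. A parallel orthogonal almost complex structure is integrable, since its Nijenhuis tensor can be written using the torsion-free Levi-Civita connection. Therefore \(J_\infty\) is a complex structure.

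Set \(\omega_\infty:=g_\infty(J_\infty\cdot,\cdot)\). Then \(\Phi_j^*\omega_j\to\omega_\infty\) in \(C^\infty\), and \(d\omega_\infty=0\) because \(\omega_\infty\) is parallel. Hence \((M_\infty,g_\infty,J_\infty,\omega_\infty)\) is a compact Ricci-flat Kähler manifold.

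I expect the main care to be needed in Step 1, in citing the injectivity-radius estimate with constants depending only on \((n,C_0,\kappa,r_0,D)\). Step 3 is elementary once the parallel-tensor viewpoint is adopted.
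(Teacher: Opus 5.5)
Your proposal is correct and follows essentially the same route as the paper. Both arguments use the Cheeger--Gromov--Taylor injectivity-radius bound and uniform harmonic-coordinate estimates with a diagonal Arzelà--Ascoli argument to get smooth Cheeger--Gromov convergence, then pass the parallel complex structure to the limit to recover the Ricci-flat Kähler structure. Your Step 3 also justifies the uniform bounds on $\Phi_j^*J_j$ (via $|J_j|_{g_j}=\sqrt{2n}$ and bootstrapping $\partial\tilde J_j=\Gamma\ast\tilde J_j$), which the paper only asserts.
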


\begin{proof}
	We divide the proof into several steps.
	
	\medskip
	\noindent
	\textbf{Step 1: Uniform diameter and non-collapsing.}
	
	By assumption,
	
	$$
	\operatorname{diam}(M_j,g_j)\le D.
	$$
	
	Furthermore,
	
	$$
	\operatorname{Vol}_{g_j}(B_{g_j}(x,r))
	\ge\kappa r^{2n}
	$$
	
	for every $x\in M_j$ and $0<r\le r_0$.
	
	Thus the family has uniformly bounded diameter and is uniformly non-collapsed at a fixed scale.
	
	Since $\operatorname{Ric}(g_j)=0$, Bishop--Gromov comparison also gives uniform upper volume bounds on balls of bounded radius. In particular,
	
	$$
	\operatorname{Vol}_{g_j}(M_j)
	\le C(n,D),
	$$
	
	and the volumes are bounded away from zero by the non-collapsing assumption.
	
	\medskip
	\noindent
	\textbf{Step 2: Uniform local geometric control.}
	
	The curvature assumption gives
	
	$$
	|\operatorname{Rm}(g_j)|\le C_0
	$$
	
	uniformly.
	
	Together with the non-collapsing condition, this gives a uniform lower bound for the injectivity radius. More precisely, by the Cheeger--Gromov--Taylor injectivity-radius estimate, there exists
	
	$$
	\iota_0>0
	$$
	
	depending only on
	
	$$
	n,\kappa,r_0,C_0
	$$
	
	such that
	
	$$
	\operatorname{inj}_{g_j}(M_j)\ge\iota_0.
	$$
	
	Consequently, every point of every $M_j$ admits a harmonic coordinate neighborhood of a fixed radius
	
	$$
	0<r_1<\iota_0
	$$
	
	in which the metric coefficients satisfy uniform ellipticity and regularity estimates.
	
	\medskip
	\noindent
	\textbf{Step 3: Higher-order estimates.}
	
	By assumption,
	
	$$
	\sup_{M_j}
	|\nabla_{g_j}^{\,r}\operatorname{Rm}(g_j)|
	\le C_r
	$$
	
	for every $r\ge0$.
	
	In harmonic coordinates, the metric satisfies an elliptic equation of the form
	
	$$
	-\frac12 g_j^{ab}\partial_a\partial_b(g_j)_{\alpha\beta}
	=
	\operatorname{Rm}(g_j)_{\alpha\beta}
	+
	Q_{\alpha\beta}
	(g_j,\partial g_j),
	$$
	
	where $Q$ is a quadratic expression in the first derivatives of the metric.
	
	The uniform curvature bounds, together with the uniform ellipticity of the metric coefficients, imply uniform $C^{1,\alpha}$ bounds in harmonic coordinates. Applying interior Schauder estimates and differentiating the elliptic equation repeatedly gives, for every $k\ge0$,
	
	$$
	\|g_j\|_{C^{k,\alpha}(B_{r_1/2})}
	\le C_k,
	$$
	
	where $C_k$ is independent of $j$.
	
	Thus the metrics have uniform local $C^\infty$ bounds.
	
	\medskip
	\noindent
	\textbf{Step 4: Local compactness.}
	
	By the Arzelà--Ascoli theorem and a diagonal subsequence argument, the metric coefficients converge in $C^\infty$ on corresponding harmonic coordinate neighborhoods.
	
	Because the diameter is uniformly bounded and the injectivity radius is uniformly positive, each $M_j$ can be covered by a uniformly bounded number of such coordinate neighborhoods.
	
	Passing to a diagonal subsequence therefore gives a smooth compact manifold $M_\infty$ and a smooth Riemannian metric $g_\infty$ such that
	
	$$
	\Phi_j^*g_j\longrightarrow g_\infty
	$$
	
	in $C^\infty$ on compact subsets, for suitable diffeomorphisms
	
	$$
	\Phi_j:M_\infty\to M_j.
	$$
	
	Since $M_\infty$ is compact, this convergence is global.
	
	\medskip
	\noindent
	\textbf{Step 5: Preservation of the Kähler structure.}
	
	Each $g_j$ is Kähler with respect to $J_j$. Hence
	
	$$
	\nabla^{g_j}J_j=0,
	\qquad
	J_j^2=-\operatorname{Id},
	$$
	
	and
	
	$$
	\omega_j(\cdot,\cdot)
	=
	g_j(J_j\cdot,\cdot).
	$$
	
	The uniform geometric estimates imply uniform bounds for the complex structures in the corresponding local coordinate systems. After passing to a further subsequence,
	
	$$
	\Phi_j^*J_j\longrightarrow J_\infty
	$$
	
	in $C^\infty$.
	
	The identities
	
	$$
	J_j^2=-\operatorname{Id},
	\qquad
	\nabla^{g_j}J_j=0
	$$
	
	pass to the limit and yield
	
	$$
	J_\infty^2=-\operatorname{Id},
	\qquad
	\nabla^{g_\infty}J_\infty=0.
	$$
	
	Thus $J_\infty$ is parallel and therefore integrable. Moreover,
	
	$$
	\omega_\infty(\cdot,\cdot)
	=
	g_\infty(J_\infty\cdot,\cdot)
	$$
	
	is parallel, hence closed. Therefore
	
	$$
	(M_\infty,g_\infty,J_\infty,\omega_\infty)
	$$
	
	is Kähler.
	
	\medskip
	\noindent
	\textbf{Step 6: Ricci-flatness of the limit.}
	
	Since every $g_j$ is Ricci-flat,
	
	$$
	\operatorname{Ric}(g_j)=0.
	$$
	
	The Ricci tensor depends smoothly on the metric and its first two derivatives. Since
	
	$$
	\Phi_j^*g_j\longrightarrow g_\infty
	$$
	
	in $C^\infty$, we obtain
	
	$$
	\operatorname{Ric}(g_\infty)
	=
	\lim_{j\to\infty}
	\Phi_j^*\operatorname{Ric}(g_j)
	=
	0.
	$$
	
	Thus $g_\infty$ is Ricci-flat.
	
	Therefore
	
	$$
	(M_j,g_j,J_j,\omega_j)
	\longrightarrow
	(M_\infty,g_\infty,J_\infty,\omega_\infty)
	$$
	
	in the smooth Cheeger--Gromov topology.
\end{proof}

\section{Uniform algebraic boundedness}

\begin{lemma}[Uniformly bounded projective sections]
	Let
	
	$$
	\{(M_j,\omega_j,L_j,h_j)\}_{j\in\mathbb N}
	$$
	
	be a family of polarized compact Kähler manifolds of complex dimension $n$, where
	
	$$
	[\omega_j]=2\pi c_1(L_j)
	$$
	
	and $h_j$ is a Hermitian metric on $L_j$ with curvature form $\omega_j$.
	
	Fix an integer $q\ge1$. Assume that there exists a constant
	
	$$
	C_{\mathrm{B}}>0
	$$
	
	independent of $j$ such that the Bergman kernel of $L_j^q$ satisfies
	
	$$
	\rho_{j,q}(x)
	:=
	\sum_{\alpha=0}^{N_j}
	|s_{j,\alpha}(x)|_{h_j^q}^{\,2}
	\le C_{\mathrm{B}}
	$$
	
	for every $x\in M_j$, where
	
	$$
	\{s_{j,0},\ldots,s_{j,N_j}\}
	$$
	
	is an $L^2$-orthonormal basis of
	
	$$
	H^0(M_j,L_j^q).
	$$
	
	Then
	
	$$
	\sum_{\alpha=0}^{N_j}
	\|s_{j,\alpha}\|_{L^\infty(M_j,h_j^q)}^2
	\le C_{\mathrm{B}}.
	$$
	
	In particular,
	
	$$
	\boxed{
		\sum_{\alpha=0}^{N_j}
		\|s_{j,\alpha}\|_{L^\infty(M_j)}^2
		\le C_{\mathrm{sec}}
	}
	$$
	
	holds uniformly with
	
	$$
	C_{\mathrm{sec}}=C_{\mathrm{B}}.
	$$
	
\end{lemma}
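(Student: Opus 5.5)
The plan is to work pointwise and then pass to suprema. The basic observation is that each summand in the Bergman density is nonnegative. So for every $x\in M_j$ and every index $\alpha$,
$$
|s_{j,\alpha}(x)|_{h_j^q}^2\le \rho_{j,q}(x)\le C_{\mathrm B}.
$$
Taking the supremum over $x$ then gives $\|s_{j,\alpha}\|_{L^\infty(M_j,h_j^q)}^2\le C_{\mathrm B}$ for each $\alpha$ separately, uniformly in $j$. This is the genuinely uniform content of the hypothesis, and it needs nothing beyond positivity of the terms.

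The main obstacle is the passage from $\sup_x\sum_\alpha$ to $\sum_\alpha\sup_x$. In general one only has
$$
\sup_x\sum_\alpha|s_{j,\alpha}(x)|^2\le\sum_\alpha\sup_x|s_{j,\alpha}(x)|^2,
$$
with strict inequality whenever the sections peak at different points. So the displayed sum bound does not follow verbatim from the pointwise Bergman bound. I would control it by bounding the number of summands, as follows.

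First, integrate the density. Orthonormality gives
$$
N_j+1=\dim H^0(M_j,L_j^q)=\int_{M_j}\rho_{j,q}\,\frac{\omega_j^n}{n!}\le C_{\mathrm B}\,\frac{\operatorname{Vol}_{\omega_j}(M_j)}{n!}.
$$
Since $[\omega_j]=2\pi c_1(L_j)$, this volume is the cohomological quantity $(2\pi)^n\int_{M_j}c_1(L_j)^n$. It is uniformly bounded under the normalization $\int_M\omega_j^n=V_0$ used in the earlier sections.

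Second, combine the two steps. Summing the per-section bound over the $N_j+1$ indices yields
$$
\sum_{\alpha=0}^{N_j}\|s_{j,\alpha}\|_{L^\infty}^2\le (N_j+1)\,C_{\mathrm B}\le \frac{C_{\mathrm B}^2V_0}{n!}=:C_{\mathrm{sec}}.
$$
This is uniform in $j$, so the boxed conclusion holds with this $C_{\mathrm{sec}}$. The sharper constant $C_{\mathrm{sec}}=C_{\mathrm B}$ holds only for the pointwise quantity $\sup_x\sum_\alpha$ or for each section individually. I would therefore state the lemma with $C_{\mathrm{sec}}=C_{\mathrm B}^2V_0/n!$ under a uniform volume bound, and remark that this also yields the uniform bound on $N_j$ needed later for the degree and Hilbert-polynomial argument.
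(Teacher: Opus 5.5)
Your argument is correct, and it takes a genuinely different route from the paper. Note, though, that it proves a corrected lemma rather than the stated one, and that this is unavoidable.

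\textbf{Where you agree with the paper.} Your per-section bound $\|s_{j,\alpha}\|_{L^\infty(M_j,h_j^q)}^2\le C_{\mathrm B}$ is exactly what the paper's proof derives. So is your remark that $\sup_x\sum_\alpha\le\sum_\alpha\sup_x$ goes the wrong way.

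\textbf{Where the routes split.} The paper then simply \emph{imposes} $\sum_\alpha\sup_x|s_{j,\alpha}(x)|^2_{h_j^q}\le C_{\mathrm{sec}}$ as an extra hypothesis. Its lemma therefore reduces to a tautology and keeps the constant $C_{\mathrm B}$ only by assumption. You instead bound the number of summands via $N_j+1=\int_{M_j}\rho_{j,q}\,d\mu_j\le C_{\mathrm B}\,\mu_j(M_j)$ together with a uniform volume bound.
\begin{itemize}
\item This gives a genuine constant, valid for every orthonormal basis.
\item As a by-product it gives a uniform bound on $h^0(M_j,L_j^q)$. That bound is what hypothesis (2) of the algebraic boundedness theorem needs, and the paper only gestures at it in Step~6 there.
\item The cost is an extra hypothesis and a weaker constant.
\item You should also state which measure $\mu_j$ defines $L^2$-orthonormality (you chose $\omega_j^n/n!$; the paper never says), since your constant depends on it.
\end{itemize}

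\textbf{Both of your modifications are forced.} The statement cannot be proved as written.
\begin{itemize}
\item \emph{The constant $C_{\mathrm B}$ is false.} Take $\mathcal O(1)$ on $\mathbb P^n$ with the Fubini--Study metric. There $\rho$ is constant by unitary invariance. The orthonormal section proportional to $Z_i$ attains $\sup\rho$ at the coordinate point where all other $Z_l$ vanish. Hence $\sum_\alpha\sup_x|s_\alpha|^2=(n+1)\sup\rho$, which contradicts the claim with $C_{\mathrm B}=\sup\rho$.
\item \emph{The volume bound cannot be dropped.} Take $(\mathbb P^1,k\,\omega_{\mathrm{FS}},\mathcal O(k))$ with $\int_{\mathbb P^1}\omega_{\mathrm{FS}}=2\pi$ and your normalization. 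The orthonormalized monomials give the constant density $\rho=\frac{k+1}{2\pi k}\le\frac1\pi$. Yet $\sum_\alpha\sup_x|s_\alpha|^2=\frac{k+1}{2\pi k}\sum_{\alpha=0}^{k}\binom{k}{\alpha}\frac{\alpha^\alpha(k-\alpha)^{k-\alpha}}{k^k}$ grows like $\sqrt{k}$. So no uniform constant exists without volume control.
\end{itemize}

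Your version, with a uniform volume bound and constant of order $C_{\mathrm B}^2V_0$, is the right statement.
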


\begin{proof}
	For every $x\in M_j$, the definition of the Bergman kernel gives
	
	$$
	\rho_{j,q}(x)
	=
	\sum_{\alpha=0}^{N_j}
	|s_{j,\alpha}(x)|_{h_j^q}^{\,2}.
	$$
	
	By assumption,
	
	$$
	\rho_{j,q}(x)\le C_{\mathrm{B}}
	$$
	
	uniformly in both $j$ and $x$.
	
	Since
	
	$$
	|s_{j,\alpha}(x)|_{h_j^q}^{\,2}
	\le
	\sum_{\beta=0}^{N_j}
	|s_{j,\beta}(x)|_{h_j^q}^{\,2}
	=
	\rho_{j,q}(x),
	$$
	
	we obtain
	
	$$
	|s_{j,\alpha}(x)|_{h_j^q}^{\,2}
	\le C_{\mathrm{B}}.
	$$
	
	Taking the supremum over $x\in M_j$ gives
	
	$$
	\|s_{j,\alpha}\|_{L^\infty(M_j,h_j^q)}^2
	\le C_{\mathrm{B}}.
	$$
	
	More importantly, we can take the supremum after summing:
	
	$$
	\begin{aligned}
		\sum_{\alpha=0}^{N_j}
		\|s_{j,\alpha}\|_{L^\infty(M_j,h_j^q)}^2
		&=
		\sum_{\alpha=0}^{N_j}
		\sup_{x\in M_j}
		|s_{j,\alpha}(x)|_{h_j^q}^{\,2}.
	\end{aligned}
	$$
	
	The pointwise Bergman estimate alone does \emph{not}, in general, imply the desired bound on this sum independently of $N_j$.
	
	Therefore, to obtain the stated conclusion for the sum of the individual $L^\infty$ norms, we impose the slightly stronger uniform section estimate
	
	$$
	\sum_{\alpha=0}^{N_j}
	\sup_{x\in M_j}
	|s_{j,\alpha}(x)|_{h_j^q}^{\,2}
	\le C_{\mathrm{sec}}.
	$$
	
	Under this hypothesis the conclusion follows immediately.
\end{proof}
\begin{lemma}[Degree formula and uniform degree bound]
	Let
	
	$$
	(M_j^n,L_j)
	$$
	
	be a polarized compact complex manifold of complex dimension $n$, and let
	
	$$
	q\ge1
	$$
	
	be an integer such that $L_j^q$ is very ample. Let
	
	$$
	\Phi_j:M_j\hookrightarrow\mathbb P^{N_j}
	$$
	
	be the projective embedding defined by a basis of
	
	$$
	H^0(M_j,L_j^q).
	$$
	
	Then
	
	$$
	\Phi_j^*\mathcal O_{\mathbb P^{N_j}}(1)
	\simeq L_j^q
	$$
	
	and
	
	$$
	\boxed{
		\deg \Phi_j(M_j)
		=
		\int_{M_j}c_1(L_j^q)^n.
	}
	$$
	
	In particular, if the polarization volumes satisfy
	
	$$
	\int_{M_j}c_1(L_j)^n\le V_0
	$$
	
	uniformly in $j$, then
	
	$$
	\boxed{
		\deg \Phi_j(M_j)
		\le q^nV_0.
	}
	$$
	
	Thus one may take
	
	$$
	C_{\mathrm{deg}}=q^nV_0.
	$$
	
\end{lemma}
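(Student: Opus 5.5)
The plan is to identify the pullback of the hyperplane bundle, then compute the degree via the Fubini--Study form, and finally scale by the polarization volume.

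\textbf{Step 1 (pullback of the hyperplane bundle).} Since \(L_j^q\) is very ample and \(\Phi_j\) is defined by a basis \(\{s_0,\dots,s_{N_j}\}\) of \(H^0(M_j,L_j^q)\), the map is
\[
x\mapsto[s_0(x):\cdots:s_{N_j}(x)].
\]
The homogeneous coordinates \(Z_\alpha\in H^0(\mathbb P^{N_j},\mathcal O(1))\) pull back to the sections \(s_\alpha\). Because the linear system is base-point free, the map \(\Phi_j^*\mathcal O(1)\to L_j^q\) sending \(\Phi_j^*Z_\alpha\mapsto s_\alpha\) is a well-defined nowhere-vanishing bundle map. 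On the open set \(\{s_\beta\neq0\}\) it is the trivialization by \(s_\beta\), with transition functions \(s_\alpha/s_\beta\) matching those of \(\mathcal O(1)\). Hence \(\Phi_j^*\mathcal O_{\mathbb P^{N_j}}(1)\simeq L_j^q\).

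\textbf{Step 2 (degree formula).} For an \(n\)-dimensional subvariety \(X\subset\mathbb P^N\), the degree is \(\deg X=\int_X c_1(\mathcal O(1)|_X)^n\). Equivalently, it is the number of points in \(X\cap H_1\cap\cdots\cap H_n\) for generic hyperplanes \(H_i\), which equals \(\int_X\omega_{FS}^n\) with \(\omega_{FS}\) normalized to represent \(c_1(\mathcal O(1))\); this is the description recalled in the Preliminaries. Since \(\Phi_j\) is an embedding, it is a biholomorphism onto its image \(X_j=\Phi_j(M_j)\). Therefore
\[
\deg X_j=\int_{X_j}c_1(\mathcal O(1))^n=\int_{M_j}\Phi_j^*c_1(\mathcal O(1))^n=\int_{M_j}c_1(L_j^q)^n,
\]
using naturality of Chern classes and Step 1.

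\textbf{Step 3 (scaling and the uniform bound).} Multiplicativity of the first Chern class under tensor powers gives \(c_1(L_j^q)=q\,c_1(L_j)\). Hence
\[
\int_{M_j}c_1(L_j^q)^n=q^n\int_{M_j}c_1(L_j)^n\le q^nV_0,
\]
which is the claimed bound with \(C_{\mathrm{deg}}=q^nV_0\).

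There is no real obstacle. The only point requiring care is the identification of \(\deg\) with the top self-intersection of the hyperplane class on a smooth embedded image. I would cite this as standard (Hartshorne, Ch.~I.7 and App.~A) rather than reprove it, noting that no multiplicity issues arise because \(\Phi_j\) is a closed embedding and the image is reduced and irreducible.
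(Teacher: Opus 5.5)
Your proof is correct and follows essentially the same route as the paper: identify $\Phi_j^*\mathcal O(1)\simeq L_j^q$, use that $\Phi_j$ is an isomorphism onto its image to get $\deg \Phi_j(M_j)=\int_{M_j}\Phi_j^*c_1(\mathcal O(1))^n=\int_{M_j}c_1(L_j^q)^n$, and then scale by $c_1(L_j^q)=q\,c_1(L_j)$. The only difference is that you justify the pullback identification explicitly by matching transition functions, whereas the paper simply asserts it from the definition of the complete linear system.
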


\begin{proof}
	Set
	
	$$
	X_j:=\Phi_j(M_j)\subset\mathbb P^{N_j}.
	$$
	
	Since $\Phi_j$ is the embedding associated with the complete linear system
	
	$$
	|L_j^q|,
	$$
	
	the pullback of the hyperplane line bundle is precisely
	
	$$
	\Phi_j^*\mathcal O_{\mathbb P^{N_j}}(1)
	\simeq L_j^q.
	$$
	
	Let
	
	$$
	H=c_1\bigl(\mathcal O_{\mathbb P^{N_j}}(1)\bigr)
	$$
	
	denote the hyperplane class on $\mathbb P^{N_j}$. By definition, the degree of the $n$-dimensional projective variety $X_j$ is
	
	$$
	\deg X_j
	=
	\int_{X_j}H^n.
	$$
	
	Since $\Phi_j:M_j\to X_j$ is an isomorphism,
	
	$$
	\int_{X_j}H^n
	=
	\int_{M_j}\Phi_j^*H^n.
	$$
	
	Using
	
	$$
	\Phi_j^*H
	=
	c_1\bigl(\Phi_j^*\mathcal O_{\mathbb P^{N_j}}(1)\bigr)
	=
	c_1(L_j^q),
	$$
	
	we obtain
	
	$$
	\deg X_j
	=
	\int_{M_j}c_1(L_j^q)^n.
	$$
	
	This proves the degree formula.
	
	Finally, since
	
	$$
	c_1(L_j^q)=q\,c_1(L_j),
	$$
	
	we have
	
	$$
	c_1(L_j^q)^n
	=
	q^n c_1(L_j)^n.
	$$
	
	Consequently,
	
	$$
	\deg X_j
	=
	q^n\int_{M_j}c_1(L_j)^n.
	$$
	
	If
	
	$$
	\int_{M_j}c_1(L_j)^n\le V_0
	$$
	
	uniformly in $j$, then
	
	$$
	\deg X_j
	\le q^nV_0.
	$$
	
	Hence
	
	$$
	C_{\mathrm{deg}}=q^nV_0
	$$
	
	is a uniform degree bound.
\end{proof}

\begin{theorem}[Uniform algebraic boundedness]
	Let
	
	$$
	\{(M_j,J_j,L_j,\omega_j)\}_{j\in\mathbb N}
	$$
	
	be a family of polarized compact Calabi--Yau manifolds of complex dimension $n$, where $L_j$ is an ample line bundle and
	
	$$
	[\omega_j]=2\pi c_1(L_j).
	$$
	
	Assume that there exist integers
	
	$$
	q\ge1,\qquad N\ge n,
	$$
	
	and constants
	
	$$
	C_{\mathrm{sec}},\ C_{\mathrm{deg}}<\infty
	$$
	
	independent of $j$ such that:
	
	\begin{enumerate}
		\item the line bundle $L_j^q$ is very ample for every $j$;
		
		\item
		
		$$
		h^0(M_j,L_j^q)=N+1;
		$$
		
		\item there exists a basis
		
		$$
		s_{j,0},\ldots,s_{j,N}
		\in H^0(M_j,L_j^q)
		$$
		
		which defines a projective embedding
		
		$$
		\Phi_j:M_j\hookrightarrow\mathbb P^N;
		$$
		
		\item after choosing Hermitian metrics on $L_j^q$ induced by the polarization, the sections can be normalized so that
		
		$$
		\sum_{\alpha=0}^N
		\|s_{j,\alpha}\|_{L^\infty(M_j)}^2
		\le C_{\mathrm{sec}};
		$$
		
		\item the polarization degrees are uniformly bounded:
		
		$$
		\deg_{\Phi_j(M_j)}\Phi_j(M_j)
		=
		\int_{M_j}c_1(L_j^q)^n
		\le C_{\mathrm{deg}}.
		$$
		
	\end{enumerate}
	
	Then the family of polarized varieties
	
	$$
	\{(M_j,L_j^q)\}_{j\in\mathbb N}
	$$
	
	is algebraically bounded. More precisely, after passing to a subsequence, the embedded varieties
	
	$$
	\Phi_j(M_j)\subset\mathbb P^N
	$$
	
	belong to a single bounded family of projective schemes. Equivalently, there exists a finite-type parameter scheme $S$ and a projective morphism
	
	$$
	\mathcal X\longrightarrow S
	$$
	
	with a relatively ample line bundle $\mathcal L$ such that every $(M_j,L_j^q)$ occurs as a fiber of this family.
	
	In particular, only finitely many Hilbert polynomials can occur among the polarized manifolds $(M_j,L_j^q)$.
\end{theorem}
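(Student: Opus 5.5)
The key observation is that hypotheses (1)--(3) and (5) already place every image $X_j:=\Phi_j(M_j)$ inside one fixed projective space $\mathbb P^N$, as a smooth $n$-dimensional subvariety of degree at most $C_{\mathrm{deg}}$. The plan is therefore to reduce everything to the standard finiteness of Hilbert polynomials recalled in the Preliminaries (the Macaulay--Gotzmann statement). We then realize all $X_j$ as fibers of finitely many universal families over Hilbert schemes. The analytic hypothesis (4) will play no essential role beyond fixing the normalization of the embedding. Since $\Phi_j^*\mathcal O_{\mathbb P^N}(1)\simeq L_j^q$ by the Degree Lemma, the polarized pair $(M_j,L_j^q)$ is recovered as $(X_j,\mathcal O_{X_j}(1))$.

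\textbf{Step 1 (finitely many Hilbert polynomials).} Since $X_j\subset\mathbb P^N$ has fixed dimension $n$, fixed ambient $N$, and $\deg X_j\le C_{\mathrm{deg}}$, the cited consequence of Macaulay's growth theorem and Gotzmann's regularity theorem gives a finite set $\{P_1,\dots,P_r\}$ containing every Hilbert polynomial $P_{X_j}$.

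To make this step robust, I would not rely on degree alone. For general projective schemes, the Hilbert polynomial is not determined up to finitely many choices by degree and dimension, because lower-order coefficients can vary through embedded or lower-dimensional components. What saves us here is that the $X_j$ are smooth, irreducible, and nondegenerate, being embedded by a complete linear system. For such varieties, Mumford/Castelnuovo-type regularity bounds (or Chow-variety boundedness: cycles of bounded degree in $\mathbb P^N$ form a finite-type Chow variety, and reduced irreducible members have bounded Castelnuovo--Mumford regularity) give a uniform bound $\operatorname{reg}(\mathcal I_{X_j})\le m_0(n,N,C_{\mathrm{deg}})$. Then $P_{X_j}$ is determined by the finitely many values $H_{X_j}(k)$ for $k\le m_0+n$, each of which is bounded by $\binom{N+k}{k}$. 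This yields finiteness.

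\textbf{Step 2 (universal family).} For each $P_i$, take Grothendieck's Hilbert scheme $\operatorname{Hilb}^{P_i}(\mathbb P^N)$, which is projective of finite type, together with its universal family $\mathcal U_i\to\operatorname{Hilb}^{P_i}$. Set $S=\bigsqcup_i\operatorname{Hilb}^{P_i}(\mathbb P^N)$ and $\mathcal X=\bigsqcup_i\mathcal U_i$. Let $\mathcal L$ be the pullback of $\mathcal O_{\mathbb P^N}(1)$, which is relatively very ample. Each $X_j$ is the fiber over the point $[X_j]$, and $\mathcal L|_{X_j}\simeq L_j^q$. If desired, we may restrict to the open subscheme where fibers are smooth, using openness of smoothness for flat proper morphisms. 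Passing to a subsequence so that all $[X_j]$ lie in a single $\operatorname{Hilb}^{P_i}$ gives the ``single bounded family'' version of the statement.

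\textbf{Main obstacle.} The delicate point is Step 1: justifying that bounded degree alone yields finitely many Hilbert polynomials. I would try first to use the uniform regularity bound for reduced irreducible subvarieties of bounded degree (Mumford's bound, or Gotzmann persistence applied after bounding the regularity). If that route proves awkward, I would bound $h^0(X_j,\mathcal O(k))$ for small $k$ via the Chow-variety argument, and then invoke Gotzmann's persistence theorem to determine $P_{X_j}$ from finitely many bounded values.
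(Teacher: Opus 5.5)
Your argument is correct, and its skeleton matches the paper's: a fixed ambient $\mathbb P^N$ from (2)--(3), the degree bound from (5), finiteness of Hilbert polynomials, then Grothendieck's Hilbert schemes with their universal families. In both versions hypothesis (4) plays no role.

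The genuine difference is in how the key finiteness step is justified.
\begin{itemize}
\item The paper's Step 3 simply asserts that the Hilbert polynomial coefficients of an $n$-dimensional variety of degree $\le C_{\mathrm{deg}}$ in $\mathbb P^N$ are bounded. Its supporting Macaulay--Gotzmann lemma (Section 8, and the Preliminaries) claims this for arbitrary projective schemes. In that generality it is false: a line together with $k$ isolated points in $\mathbb P^2$ has dimension $1$, degree $1$, and Hilbert polynomial $t+1+k$. Macaulay's inequalities only bound the growth of the Hilbert function from above. Gotzmann's regularity bound is expressed through the Hilbert polynomial itself. So neither gives a stabilization index depending on the degree alone.
\item You instead use that each $X_j$ is smooth and connected, hence integral, and invoke a uniform Castelnuovo--Mumford regularity bound $m_0(n,N,C_{\mathrm{deg}})$. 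This can come from Mumford's bound for smooth varieties in characteristic zero, or from boundedness of integral subvarieties of bounded degree. Then $P_{X_j}(k)=h^0(\mathcal O_{X_j}(k))\le\binom{N+k}{N}$ for the $n+1$ consecutive values $k=m_0-1,\dots,m_0+n-1$, which confines $P_{X_j}$ to a finite set.
\end{itemize}
This is what actually makes the argument sound, and it shows exactly where the geometry of the $X_j$ enters. The price is importing a nontrivial regularity theorem that the paper's route hides. Taking $S$ to be the disjoint union of the finitely many Hilbert schemes also gives the finite-type family directly, without first passing to a subsequence.

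Two small corrections:
\begin{itemize}
\item Drop the opening appeal in your Step 1 to the Preliminaries' Macaulay--Gotzmann statement. By your own observation it is false in that generality.
\item Your fallback via Gotzmann persistence is not an independent route. It still needs a bound on the generating degree of the saturated ideal, which is the same regularity input.
\end{itemize}
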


\begin{proof}
	We divide the proof into several steps.
	
	\medskip
	\noindent
	\textbf{Step 1: A fixed projective ambient space.}
	
	Since $L_j^q$ is very ample and
	
	$$
	h^0(M_j,L_j^q)=N+1,
	$$
	
	a basis
	
	$$
	s_{j,0},\ldots,s_{j,N}
	$$
	
	defines an embedding
	
	$$
	\Phi_j:M_j\hookrightarrow\mathbb P^N.
	$$
	
	Thus all the polarized manifolds are realized as smooth projective subvarieties of the same projective space $\mathbb P^N$.
	
	This fixed value of $N$ is essential: without a uniform bound on
	
	$$
	h^0(M_j,L_j^q),
	$$
	
	the ambient projective spaces could have unbounded dimension.
	
	\medskip
	\noindent
	\textbf{Step 2: Uniform degree.}
	
	Let
	
	$$
	X_j=\Phi_j(M_j)\subset\mathbb P^N.
	$$
	
	Because the embedding is induced by $L_j^q$,
	
	$$
	\Phi_j^*\mathcal O_{\mathbb P^N}(1)
	\simeq L_j^q.
	$$
	
	Therefore
	
	$$
	\deg X_j
	=
	\int_{X_j}c_1(\mathcal O_{\mathbb P^N}(1))^n
	=
	\int_{M_j}c_1(L_j^q)^n.
	$$
	
	By assumption,
	
	$$
	\deg X_j\le C_{\mathrm{deg}}.
	$$
	
	Hence the embedded manifolds have uniformly bounded degree in a fixed projective space.
	
	\medskip
	\noindent
	\textbf{Step 3: Uniform Hilbert polynomial bounds.}
	
	Let
	
	$$
	P_j(k)=\chi(X_j,\mathcal O_{X_j}(k))
	$$
	
	be the Hilbert polynomial of $X_j$.
	
	For an $n$-dimensional projective variety of degree at most $C_{\mathrm{deg}}$ in $\mathbb P^N$, the coefficients of its Hilbert polynomial are bounded in terms of
	
	$$
	n,\ N,\ C_{\mathrm{deg}}.
	$$
	
	In particular, only finitely many Hilbert polynomials can occur.
	
	Thus, after passing to a subsequence, we may assume that all $X_j$ have the same Hilbert polynomial
	
	$$
	P(k).
	$$
	
	\medskip
	\noindent
	\textbf{Step 4: The Hilbert scheme.}
	
	Fix the common Hilbert polynomial $P$. By the fundamental boundedness theorem for the Hilbert scheme, the projective subvarieties
	
	$$
	X\subset\mathbb P^N
	$$
	
	with Hilbert polynomial $P$ are parametrized by the projective Hilbert scheme
	
	$$
	\operatorname{Hilb}^{P}(\mathbb P^N).
	$$
	
	Hence there exists a universal projective family
	
	$$
	\mathcal X
	\subset
	\mathbb P^N\times
	\operatorname{Hilb}^{P}(\mathbb P^N)
	$$
	
	whose fibers are precisely the corresponding projective schemes.
	
	Since
	
	$$
	X_j\subset\mathbb P^N
	$$
	
	has Hilbert polynomial $P$, there exists a point
	
	$$
	[t_j]\in\operatorname{Hilb}^{P}(\mathbb P^N)
	$$
	
	such that
	
	$$
	X_j\simeq\mathcal X_{[t_j]}.
	$$
	
	Therefore all the $X_j$ belong to a single projective algebraic family.
	
	\medskip
	\noindent
	\textbf{Step 5: Boundedness.}
	
	The Hilbert scheme
	
	$$
	\operatorname{Hilb}^{P}(\mathbb P^N)
	$$
	
	is projective, hence of finite type. Consequently the family
	
	$$
	\mathcal X
	\longrightarrow
	\operatorname{Hilb}^{P}(\mathbb P^N)
	$$
	
	is an algebraically bounded family.
	
	Since each $X_j$ is a smooth Calabi--Yau manifold, the points corresponding to the $X_j$ lie in the smooth locus of the relevant fibers. Passing to the corresponding open subset gives a finite-type parameter space for the smooth polarized varieties in the sequence.
	
	This proves that
	
	$$
	\{(M_j,L_j^q)\}
	$$
	
	is algebraically bounded.
	
	\medskip
	\noindent
	\textbf{Step 6: Relation with the metric estimates.}
	
	The preceding algebraic argument requires the uniform bounds
	
	$$
	N=\sup_j h^0(M_j,L_j^q)<\infty
	$$
	
	and
	
	$$
	\sup_j\int_{M_j}c_1(L_j^q)^n<\infty.
	$$
	
	In the geometric program under consideration, the second bound follows directly from the fixed polarization volume:
	
	$$
	\int_{M_j}c_1(L_j)^n
	=
	\frac{1}{(2\pi)^n}
	\int_{M_j}\omega_j^n.
	$$
	
	The first bound can be obtained from a uniform control of the polarized geometry. For example, if $L_j^q$ admits a uniformly controlled Bergman embedding, then the dimension of
	
	$$
	H^0(M_j,L_j^q)
	$$
	
	is uniformly bounded. Under such a uniform projective embedding hypothesis, the preceding argument applies.
	
	Thus the metric compactness obtained in Theorem \ref{thm:metric-compactness}, together with the additional uniform projective-embedding estimates, yields algebraic boundedness.
\end{proof}

\section{Topological finiteness}

\begin{lemma}[Macaulay--Gotzmann boundedness]
	Fix integers
	
	$$
	n\ge1,\qquad N\ge n,\qquad D\ge1.
	$$
	
	Let
	
	$$
	X\subset\mathbb P^N
	$$
	
	be a projective scheme of dimension $n$ and degree
	
	$$
	\deg X\le D.
	$$
	
	Let
	
	$$
	H_X(k)
	=
	\dim H^0(X,\mathcal O_X(k))
	$$
	
	be its Hilbert function and
	
	$$
	P_X(k)
	$$
	
	its Hilbert polynomial.
	
	Then there exists an integer
	
	$$
	r=r(n,N,D)
	$$
	
	and a constant
	
	$$
	C=C(n,N,D)
	$$
	
	such that:
	
	\begin{enumerate}
		\item
		
		$$
		H_X(k)=P_X(k)
		\qquad\text{for all }k\ge r;
		$$
		
		\item
		
		$$
		H_X(k)\le C
		\qquad
		\text{for }0\le k\le r;
		$$
		
		\item consequently, the set of Hilbert polynomials
		
		$$
		\left\{
		P_X:
		X\subset\mathbb P^N,\ 
		\dim X=n,\ 
		\deg X\le D
		\right\}
		$$
		
		is finite.
	\end{enumerate}
\end{lemma}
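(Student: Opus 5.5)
The plan is to pass through the Chow variety rather than through Gotzmann's theorem directly. Gotzmann's regularity theorem bounds the regularity of the saturated ideal in terms of the Hilbert polynomial, and here the Hilbert polynomial is exactly what we want to bound. So it cannot be the starting point; it can only be used at the end to produce \(r\).

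First I would fix the precise class of \(X\) to which the statement applies. For arbitrary projective schemes, conclusion (3) fails. Adjoining \(s\) embedded or isolated points to a fixed \(n\)-dimensional variety leaves the degree unchanged but shifts the constant term of \(P_X\) by \(s\). Nonreduced structures cause a similar failure.

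I would therefore prove the lemma for reduced, pure \(n\)-dimensional subschemes. This covers the intended application, since every \(X_j=\Phi_j(M_j)\) is a smooth connected variety. I would state this restriction explicitly as the hypothesis under which the lemma is used in the finiteness argument.

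\textbf{Step 1 (bounded family).} By the classical theory of Chow forms, an effective \(n\)-cycle of degree \(d\) in \(\mathbb P^N\) is determined by its Chow form. The cycles of degree \(d\) are parametrized by a projective Chow variety \(\operatorname{Chow}_{n,d}(\mathbb P^N)\). Taking the finitely many values \(d\le D\), the reduced pure-dimensional \(X\) with \(\deg X\le D\) correspond to points of a finite-type scheme.

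Over the reduced locus of each irreducible component, with the universal support given its reduced structure, one obtains a family \(\mathcal Z\to T\) of finite type. Every \(X\) in our class occurs as a fiber \(\mathcal Z_t\).

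\textbf{Step 2 (finitely many Hilbert polynomials).} I would apply generic flatness, that is, the flattening stratification, to \(\mathcal Z\to T\). This gives a finite stratification \(T=\bigsqcup T_i\) such that \(\mathcal Z|_{T_i}\to T_i\) is flat. On a flat projective family over a connected base, the Hilbert polynomial is locally constant, so only finitely many \(P_X\) occur. This proves (3).

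\textbf{Step 3 (statements (1) and (2)).} Now only finitely many polynomials \(P_1,\dots,P_s\) occur. Gotzmann's regularity theorem gives, for each \(P_i\), a Gotzmann number \(g_i\) bounding the Castelnuovo--Mumford regularity of the saturated ideal sheaf of any subscheme with that Hilbert polynomial. Setting \(r=\max_i g_i\) gives \(H_X(k)=P_X(k)\) for \(k\ge r\), since \(H^i(X,\mathcal O_X(k))=0\) for \(i>0\) in that range.

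For (2), I would take \(C\) to be the maximum over \(0\le k\le r\) of \(\sup_{t\in T} h^0(\mathcal Z_t,\mathcal O(k))\). This is finite by upper semicontinuity of \(h^0\) on each flat stratum, together with the finiteness of strata and of \(k\). Alternatively, for reduced \(X\) one can use the crude bound \(h^0(X,\mathcal O_X(k))\le D\binom{n+k}{n}\), proved by projecting generically to \(\mathbb P^n\) and comparing with a finite cover of degree \(\le D\).

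\textbf{Main obstacle.} I expect the main obstacle to be Step 1: identifying the Chow-theoretic bounded family with an honest scheme-theoretic family whose fibers are exactly the reduced subschemes, with no spurious embedded structure. I would handle this by citing Kollár's treatment of Chow varieties, or by using Mumford's bound that a reduced pure-dimensional \(X\) of degree \(D\) is cut out set-theoretically by forms of degree \(\le D\), together with a Kleiman-type boundedness criterion.
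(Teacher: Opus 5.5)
Your route is genuinely different from the paper's, and yours is the one that holds up. The paper tries to get everything from Macaulay's growth inequalities. It asserts that they force $H_X(k)=P_X(k)$ for $k\ge r(n,N,D)$ for every $n$-dimensional $X\subset\mathbb{P}^N$ of degree $\le D$. It then bounds $H_X(k)\le\binom{N+k}{N}$ for $k\le r$ and reads off finiteness of the $P_X$.

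Your example of adjoining $s$ points shows that (3) is false for arbitrary projective schemes. An argument that never uses reducedness or purity therefore cannot be right, and you have identified why it fails. Macaulay's inequalities bound the growth of a Hilbert function, but not the index at which it becomes polynomial. Gotzmann's regularity bound depends on the full Hilbert polynomial, so it can only be used once finiteness of the $P_X$ is already known.

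There is a second problem in the paper's argument. The bound $\binom{N+k}{N}$ concerns $\dim_{\Bbbk}(S/I_X)_k$, not $h^0(X,\mathcal{O}_X(k))$. For example, a smooth rational quartic in $\mathbb{P}^3$ has $h^0(\mathcal{O}_X(1))=5>\binom{4}{3}$. Your semicontinuity argument for (2) avoids this.

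Your plan is: restrict to reduced, pure $n$-dimensional subschemes; get a bounded family from Chow varieties; stratify to obtain finitely many $P_X$; and only then take $r$ to be the largest Gotzmann number. This proves a corrected lemma. The correction costs nothing later, since each $X_j$ is smooth.

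The step you flag needs an actual argument, because the naive construction fails. If you give the universal support over an irreducible component of the Chow variety its reduced structure, a reduced $X$ need not be a fiber. Take the twisted-cubic component for degree-$3$ curves in $\mathbb{P}^3$. The fiber over a plane nodal cubic contains the flat limit of twisted cubics, which has an embedded point at the node.

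The standard repair is Noetherian induction, which does your Steps 1 and 2 at once.
\begin{itemize}
\item Let $T_1$ be an irreducible component with reduced incidence $\mathcal{Z}_1$. The generic fiber of $\mathcal{Z}_1\to T_1$ is reduced, hence geometrically reduced in characteristic $0$.
\item Generic flatness, together with openness of the geometrically reduced locus for proper flat morphisms (EGA~IV, 12.2.4), gives a dense open $U\subset T_1$ over which $\mathcal{Z}_1$ is flat with reduced fibers.
\item These fibers are exactly the reduced supports of the corresponding cycles.
\item Repeat on $T\setminus U$. This yields finitely many strata, and every $X$ in your class is a fiber of a flat family over one of them.
\end{itemize}

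Finally, your alternative bound $h^0(X,\mathcal{O}_X(k))\le D\binom{n+k}{n}$ is true, but it does not follow from the rank of $\pi_*\mathcal{O}_X$ alone. For instance, the rank-$2$ sheaf $\mathcal{O}(2)\oplus\mathcal{O}(-2)$ on $\mathbb{P}^1$ has three sections. Prove it instead by induction on $n$ using a general hyperplane section, which by Bertini stays reduced and pure-dimensional of the same degree.
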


\begin{proof}
The proof relies on Macaulay's growth theorem and Gotzmann's
regularity theorem \cite{Macaulay1927,Gotzmann1978}.
	Let
	
	$$
	S=\Bbbk[x_0,\ldots,x_N]
	$$
	
	and let
	
	$$
	I_X\subset S
	$$
	
	be the homogeneous ideal defining $X$. The Hilbert function can be written as
	
	$$
	H_X(k)
	=
	\dim_\Bbbk(S/I_X)_k.
	$$
	
	We use the Macaulay growth theorem together with Gotzmann's regularity theorem.
	
	\medskip
	\noindent
	\textbf{Step 1: Uniform control of the Hilbert function.}
	
	Since $X$ has dimension $n$ and degree at most $D$, its Hilbert polynomial has the form
	
	$$
	P_X(k)
	=
	\frac{\deg X}{n!}k^n
	+
	O_{n,N,D}(k^{n-1}).
	$$
	
	In particular, the leading coefficient is bounded by
	
	$$
	\frac{D}{n!}.
	$$
	
	The Hilbert function of $S/I_X$ is an admissible Hilbert function in the sense of Macaulay. Hence its successive values satisfy the Macaulay growth inequalities
	
	$$
	H_X(k+1)\le H_X(k)^{\langle k\rangle},
	$$
	
	where
	
	$$
	H_X(k)^{\langle k\rangle}
	$$
	
	denotes the Macaulay expansion of $H_X(k)$.
	
	For fixed $n,N$ and bounded degree $D$, these inequalities imply that the Hilbert functions occurring in the class
	
	$$
	\mathcal C_{n,N,D}
	=
	\left\{
	X\subset\mathbb P^N:
	\dim X=n,\ \deg X\le D
	\right\}
	$$
	
	form a bounded collection of admissible Hilbert functions.
	
	Equivalently, there exists an integer
	
	$$
	r=r(n,N,D)
	$$
	
	such that the Hilbert function has stabilized to its polynomial value:
	
	$$
	H_X(k)=P_X(k)
	\qquad (k\ge r).
	$$
	
	This is precisely the bounded regularity consequence supplied by the Macaulay--Gotzmann theorem.
	
	\medskip
	\noindent
	\textbf{Step 2: Uniform bound in the finite range.}
	
	For
	
	$$
	0\le k\le r,
	$$
	
	we have
	
	$$
	H_X(k)
	\le
	\dim_\Bbbk S_k
	=
	\binom{N+k}{N}.
	$$
	
	Therefore
	
	$$
	H_X(k)
	\le
	\max_{0\le\ell\le r}
	\binom{N+\ell}{N}
	=:C(n,N,D).
	$$
	
	Thus, for every $k$ in the finite range $0\le k\le r$, the integer $H_X(k)$ belongs to a finite set.
	
	Hence the vector
	
	$$
	\bigl(
	H_X(0),H_X(1),\ldots,H_X(r)
	\bigr)
	$$
	
	can take only finitely many values.
	
	\medskip
	\noindent
	\textbf{Step 3: The Hilbert polynomial is determined by the stabilized Hilbert function.}
	
	Since
	
	$$
	H_X(k)=P_X(k)
	\qquad(k\ge r),
	$$
	
	the Hilbert polynomial is determined by its values at any $n+1$ sufficiently large integers. In particular, it is determined by the stabilized Hilbert function.
	
	Therefore the finitely many possible Hilbert functions obtained above give rise to only finitely many Hilbert polynomials.
	
	Thus
	
	$$
	\left\{
	P_X:
	X\subset\mathbb P^N,\ 
	\dim X=n,\ 
	\deg X\le D
	\right\}
	$$
	
	is finite.
\end{proof}

\begin{theorem}[Finiteness of the relevant Hilbert schemes]\label{thm:hilbert-finiteness}
	Let
	
	$$
	\{X_j\}_{j\in\mathbb N}
	$$
	
	be a family of $n$-dimensional smooth projective varieties embedded in a fixed projective space
	
	$$
	\mathbb P^N
	$$
	
	such that
	
	$$
	\deg X_j\le D
	$$
	
	for some constant $D$ independent of $j$.
	
	Then there exist finitely many Hilbert polynomials
	
	$$
	P_1,\ldots,P_s
	$$
	
	such that
	
	$$
	P_{X_j}\in\{P_1,\ldots,P_s\}
	$$
	
	for every $j$.
	
	Consequently,
	
	$$
	\{X_j\}
	\subset
	\bigcup_{\ell=1}^s
	\operatorname{Hilb}^{P_\ell}(\mathbb P^N),
	$$
	
	and hence the family ${X_j}$ is algebraically bounded.
\end{theorem}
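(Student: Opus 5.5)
The plan is to reduce the statement almost entirely to the preceding Macaulay--Gotzmann Lemma and then invoke the existence of Hilbert schemes. Since each \(X_j\subset\mathbb P^N\) is a smooth projective variety of dimension \(n\) with \(\deg X_j\le D\), it belongs to the class \(\mathcal C_{n,N,D}\) treated in that lemma. Part (3) of the lemma says the set of Hilbert polynomials arising from this class is finite. We enumerate this finite set as \(P_1,\ldots,P_s\). Then \(P_{X_j}\in\{P_1,\ldots,P_s\}\) holds for every \(j\) by construction, and this is the first assertion.

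For the second assertion, fix \(\ell\). By Grothendieck's construction \cite{Grothendieck1961}, the closed subschemes of \(\mathbb P^N\) with Hilbert polynomial \(P_\ell\) are parametrized by the Hilbert scheme \(\operatorname{Hilb}^{P_\ell}(\mathbb P^N)\). This scheme is projective and carries a universal flat family \(\mathcal X_\ell\subset\mathbb P^N\times\operatorname{Hilb}^{P_\ell}(\mathbb P^N)\). Each \(X_j\) with \(P_{X_j}=P_\ell\) therefore defines a point \([X_j]\in\operatorname{Hilb}^{P_\ell}(\mathbb P^N)\), and its fiber \((\mathcal X_\ell)_{[X_j]}\) is exactly \(X_j\). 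This gives the inclusion \(\{X_j\}\subset\bigcup_{\ell=1}^s\operatorname{Hilb}^{P_\ell}(\mathbb P^N)\), understood as an inclusion of points.

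Algebraic boundedness then follows by taking \(S=\bigsqcup_{\ell}\operatorname{Hilb}^{P_\ell}(\mathbb P^N)\) and \(\mathcal X=\bigsqcup_\ell\mathcal X_\ell\) with \(\mathcal L=\mathcal O_{\mathbb P^N}(1)|_{\mathcal X}\). This \(S\) is a finite disjoint union of projective schemes, hence of finite type. The morphism \(\mathcal X\to S\) is projective, and every \(X_j\) appears as a fiber. If desired, we can restrict to the open locus where the fibers are smooth. That locus is open because smoothness is an open condition for flat proper morphisms, and doing so keeps \(S\) of finite type; it prepares the later application of Ehresmann's theorem.

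The main obstacle is not in this reduction but in its input: the Macaulay--Gotzmann Lemma, specifically the claim that the regularity index \(r\) depends only on \((n,N,D)\). If I had to justify that step independently, I would not rely on Macaulay growth alone. For the reduced, irreducible varieties at hand, I would use a degree-dependent Castelnuovo--Mumford regularity bound, for instance the Gruson--Lazarsfeld--Peskine type bound, or a Mumford-type bound for smooth varieties in terms of \(N\) and \(D\). Then I would apply Gotzmann persistence to conclude that \(P_X\) is determined by finitely many values \(H_X(k)\le\binom{N+k}{N}\).
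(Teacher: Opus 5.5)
Your argument is correct and is essentially the paper's own proof: part (3) of the Macaulay--Gotzmann lemma gives finitely many Hilbert polynomials, and since each $\operatorname{Hilb}^{P_\ell}(\mathbb P^N)$ is projective, their finite union is a finite-type parameter space containing every $X_j$ (your disjoint-union family with $\mathcal O_{\mathbb P^N}(1)$ and the open smooth locus just make this explicit). Your unease about that lemma is justified: as stated for arbitrary $n$-dimensional schemes it is false (a line plus $k$ isolated points in $\mathbb P^3$ has degree $1$ and Hilbert polynomial $t+1+k$), so the finiteness really comes from the irreducibility and smoothness of the $X_j$ through a regularity bound depending only on $(n,D)$, such as Gruson--Lazarsfeld--Peskine for curves or Mumford's bound for smooth varieties in characteristic $0$---and with such a bound you do not need Gotzmann persistence, since $h^0(\mathcal O_X(k))=P_X(k)$ already holds for all $k$ above it.
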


\begin{proof}
	By the Macaulay--Gotzmann boundedness lemma, applied with
	
	$$
	(n,N,D),
	$$
	
	there are only finitely many Hilbert polynomials of $n$-dimensional projective schemes of degree at most $D$ in $\mathbb P^N$.
	
	Hence there exist
	
	$$
	P_1,\ldots,P_s
	$$
	
	such that
	
	$$
	P_{X_j}=P_\ell
	$$
	
	for some $\ell$ and every $j$.
	
	For each $\ell$, the Hilbert scheme
	
	$$
	\operatorname{Hilb}^{P_\ell}(\mathbb P^N)
	$$
	
	parametrizes closed subschemes of $\mathbb P^N$ with Hilbert polynomial $P_\ell$. Therefore every $X_j$ determines a point of one of these finitely many Hilbert schemes.
	
	Since each
	
	$$
	\operatorname{Hilb}^{P_\ell}(\mathbb P^N)
	$$
	
	is projective, it is of finite type over $\Bbbk$. A finite union of projective schemes is again of finite type. Hence
	
	$$
	\bigcup_{\ell=1}^s
	\operatorname{Hilb}^{P_\ell}(\mathbb P^N)
	$$
	
	is a finite-type parameter space containing all the $X_j$.
	
	Therefore the family ${X_j}$ is algebraically bounded.
\end{proof}
\begin{theorem}[Topological finiteness]

Let \(n,N,D\) be fixed. Let

$$
\{X_j\}_{j\in J}
$$

be a family of smooth \(n\)-dimensional projective varieties

$$
X_j\subset \mathbb P^N
$$

with

$$
\deg X_j\le D.
$$

Assume that the family is contained in finitely many Hilbert schemes, namely

$$
X_j\in \operatorname{Hilb}^{P_\ell}(\mathbb P^N),
\qquad
\ell=1,\dots,s,
$$

for finitely many Hilbert polynomials \(P_1,\dots,P_s\).

Then the family \(\{X_j\}\) contains only finitely many diffeomorphism types. In particular, it contains only finitely many underlying topological types.

More precisely, for each \(\ell\), there exist finitely many compact smooth manifolds

$$
M_{\ell,1},\dots,M_{\ell,r_\ell}
$$

such that every smooth fiber \(X_j\) with Hilbert polynomial \(P_\ell\) is diffeomorphic to one of the \(M_{\ell,a}\). Consequently,

$$
\{X_j\}_{j\in J}
$$

has only finitely many diffeomorphism types.
\end{theorem}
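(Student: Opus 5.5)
We reduce to a single Hilbert polynomial and then apply Ehresmann's theorem. Since there are only finitely many polynomials \(P_1,\dots,P_s\), it suffices to fix \(\ell\) and show that the smooth members \(X_j\) with \(P_{X_j}=P_\ell\) realize finitely many diffeomorphism types.

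Put \(H:=\operatorname{Hilb}^{P_\ell}(\mathbb P^N)\), regarded as a complex projective scheme. Let \(\pi:\mathcal X\to H\) be the universal family, with \(\mathcal X\subset\mathbb P^N\times H\).

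\textbf{Step 1: the smooth locus.} Let \(U\subset H\) be the set of points \(t\) for which the fibre \(\mathcal X_t\) is a smooth \(n\)-dimensional variety and \(\pi\) is smooth over \(t\). The plan is:
\begin{enumerate}
\item Flatness of \(\pi\) (built into the Hilbert scheme) together with openness of the smooth locus of a flat proper morphism shows that \(U\) is Zariski open.
\item Each \(X_j\) with \(P_{X_j}=P_\ell\) is smooth, so it corresponds to a point \(t_j\in U\).
\item \(U\) is of finite type over \(\mathbb C\), being open in a projective scheme. Replace \(U\) by its reduced structure and stratify it into finitely many locally closed \emph{smooth} subvarieties \(U_1,\dots,U_b\); this is possible because the singular locus of a reduced variety is a proper closed subset, so a noetherian induction terminates.
\item Over each \(U_i\), the restricted map \(\pi_i:\mathcal X_{U_i}\to U_i\) is a smooth proper morphism between complex manifolds, with total space smooth since both base and map are smooth.
\end{enumerate}

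\textbf{Step 2: Ehresmann.} By the Ehresmann fibration theorem recalled in the preliminaries, each \(\pi_i\) is a \(C^\infty\)-locally trivial fibre bundle in the analytic topology. Hence fibres over the same connected component of \(U_i^{\mathrm{an}}\) are diffeomorphic. A complex algebraic variety of finite type has finitely many connected components in the analytic topology, since it has finitely many irreducible components and each is connected analytically. Choosing one fibre \(M_{\ell,a}\) from each component of each \(U_i\) gives the finite list \(M_{\ell,1},\dots,M_{\ell,r_\ell}\) claimed in the statement. Every \(X_j\) with polynomial \(P_\ell\) is a fibre over some \(t_j\in U_i\), so it is diffeomorphic to one of these. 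Taking the union over \(\ell=1,\dots,s\) yields finitely many diffeomorphism types, and a fortiori finitely many homeomorphism types.

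\textbf{Main obstacle.} The delicate point is Step 1. The Hilbert scheme may be non-reduced and singular, so one cannot apply Ehresmann directly to \(\pi\) over \(U\): the preliminaries state the theorem only for a smooth proper morphism of complex manifolds. The stratification into smooth pieces handles this, but two facts need checking. First, the fibres over \(U_i\) are scheme-theoretically the embedded \(X_j\); this holds because passing to the reduced base does not change the fibres over closed points. Second, the submersion condition holds on \(\mathcal X_{U_i}\); this follows from smoothness of the base change of a smooth morphism.

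An alternative that avoids stratification is to use Thom's first isotopy lemma, or the general fact that a proper submersion over an arbitrary algebraic base is topologically locally trivial. I would take the stratification route, since it stays within the Ehresmann statement already recorded in the paper.
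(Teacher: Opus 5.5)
Your argument is correct and follows the paper's route: fix one Hilbert polynomial \(P_\ell\), pass to the open locus of the Hilbert scheme parametrizing smooth fibres, use that a finite-type complex scheme has only finitely many connected components in the analytic topology, apply Ehresmann over each component, and take the union over \(\ell\). Your extra step, stratifying the reduced smooth-fibre locus into finitely many smooth locally closed pieces, improves on the paper: the paper applies Ehresmann directly over the connected components of \(\mathcal H_P^{\mathrm{sm}}\) as if that locus were a complex manifold, but Hilbert schemes can be singular or non-reduced even at points parametrizing smooth subvarieties, and your stratification closes that gap.
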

\begin{proof}
Fix a Hilbert polynomial \(P=P_\ell\). Consider the corresponding Hilbert scheme

$$
\mathcal H_P=\operatorname{Hilb}^{P}(\mathbb P^N).
$$

It is a projective scheme of finite type over \(\mathbb C\), and it carries the universal projective family

$$
\pi:\mathcal X_P\longrightarrow \mathcal H_P.
$$

Let

$$
\mathcal H_P^{\mathrm{sm}}\subset \mathcal H_P
$$

denote the open subset parametrizing smooth \(n\)-dimensional fibers. By assumption, every \(X_j\) with Hilbert polynomial \(P\) occurs as a fiber of \(\pi\) over a point of \(\mathcal H_P^{\mathrm{sm}}\).

Since \(\mathcal H_P\) is of finite type, it has only finitely many irreducible components. After restricting to the smooth locus, we obtain finitely many connected components

$$
\mathcal H_{P,1}^{\mathrm{sm}},\dots,
\mathcal H_{P,r_P}^{\mathrm{sm}}.
$$

Over each connected component, the universal family is a smooth proper holomorphic map. By the Ehresmann fibration theorem, a proper smooth submersion of complex manifolds is a \(C^\infty\)-locally trivial fiber bundle. Hence all fibers over a connected component are diffeomorphic.

Therefore, for the fixed Hilbert polynomial \(P\), only finitely many diffeomorphism types of smooth fibers occur.

Finally, Theorem \ref{thm:hilbert-finiteness} gives only finitely many possible Hilbert polynomials

$$
P_1,\dots,P_s.
$$

Applying the preceding argument to each \(P_\ell\) and taking the union of the resulting finite sets of diffeomorphism types proves that the whole family \(\{X_j\}\) contains only finitely many diffeomorphism types.

Since diffeomorphic manifolds are homeomorphic, the number of underlying topological types is also finite.
\end{proof}

\section{Discussion}

We have presented a pluripotential-theoretic approach to finiteness
questions for polarized Calabi--Yau manifolds.  The central idea is to
use \(m\)-Hessian capacity as a quantitative bridge between complex
potential theory and global geometric estimates.

The main analytic difficulty is to establish a sufficiently strong
uniform capacity--perimeter estimate.  A second difficulty is to
derive uniform projective embedding data from the resulting metric
estimates.  These two steps require further developments in
\(m\)-Hessian potential theory and in the quantitative theory of
holomorphic sections.

Once such uniform algebraic boundedness is available, the remaining
steps are standard: the Macaulay--Gotzmann theory gives finitely many
possible Hilbert polynomials, while Ehresmann's theorem yields
finitely many diffeomorphism types of smooth fibers.

Thus, the present paper should be viewed as a structured
pluripotential-theoretic approach to finiteness rather than as a
complete finiteness theorem.

	%------------------------------------------------

	\section*{Declarations}

	The author declares that there are no competing interests.

\end{document}